\newcommand{\R}{\mathbb{R}}
\newcommand{\ind}{\mathbbm{1}}
\newcommand{\E}{\mathbb{E}}
\newcommand{\eps}{\varepsilon}
\newtheorem{lemma}{Lemma}
\newtheorem{theorem}{Theorem}
\newtheorem{proposition}{Proposition}
\numberwithin{equation}{section}
\def\IND{\mathbbm{1}}
\newcommand{\ol}{\overline}
\newcommand{\EXP}{\mathbb{E}}
\renewcommand{\E}{\mathbb{E}}
\newcommand{\PROB}{\mathbb{P}}
\newcommand{\var}{\mathrm{Var}}
\newcommand{\ER}{Erd\H{o}s-R\'enyi }
\date{}
\begin{document}

\title{Concentration of the spectral norm of Erd\H{o}s-R\'enyi random graphs
\thanks{
G\'abor Lugosi was supported by
the Spanish Ministry of Economy and Competitiveness,
Grant MTM2015-67304-P and FEDER, EU,
by ``High-dimensional problems in structured probabilistic models -
Ayudas Fundaci\'on BBVA a Equipos de Investigaci\'on Cientifica 2017'' 
and by ``Google Focused Award Algorithms and Learning for AI''.
Shahar Mendelson was supported in part by the Israel Science Foundation. 
Nikita Zhivotovskiy was supported by RSF grant No. 18-11-00132.
}
}
\author{
G\'abor Lugosi\thanks{Department of Economics and Business, Pompeu
  Fabra University, Barcelona, Spain, gabor.lugosi@upf.edu}
\thanks{ICREA, Pg. Lluís Companys 23, 08010 Barcelona, Spain}
\thanks{Barcelona Graduate School of Economics}
\and
Shahar Mendelson \thanks{Department of Mathematics, Technion, I.I.T, and Mathematical Sciences Institute, The Australian National University, shahar@tx.technion.ac.il}
\and
Nikita Zhivotovskiy \thanks{Department of Mathematics, Technion, I.I.T, and National University HSE, nikita.zhivotovskiy@phystech.edu}
}

\maketitle

\begin{abstract}
We present results on the concentration properties of the spectral norm $\|A_p\|$ of the adjacency matrix $A_p$ of an Erd\H{o}s-R\'enyi 
random graph $G(n,p)$. First we consider the Erd\H{o}s-R\'enyi random graph \emph{process} and prove that $\|A_p\|$ is 
\emph{uniformly} concentrated over the range $p\in [C\log n/n,1]$.
The analysis is based on delocalization arguments, uniform laws of large numbers, together with the entropy method to prove concentration inequalities. As an application of our techniques we prove sharp sub-Gaussian moment inequalities for $\|A_p\|$ for all $p\in [c\log^3n/n,1]$ that improve the general bounds of Alon, Krivelevich, and Vu \cite{AlKrVu01} and some of the more recent results of Erd{\H{o}}s et al. \cite{ErKnYaYi13}. Both results are consistent with the asymptotic result of F{\"u}redi and Koml{\'o}s \cite{FuKo81} that holds for fixed $p$ as $n\to \infty$.
\end{abstract}

\section{Introduction}

An Erd\H{o}s-R\'enyi random graph $G(n,p)$, named after the authors of the pioneering work \cite{ErRe60},  
is a graph defined on the vertex set $[n]=\{1,\ldots,n\}$  in which any two vertices $i,j \in [n]$, $i\neq j$, are
connected by an edge independently, with probability $p$. Such a random graph is represented by its adjacency
matrix $A_p$. $A_p$ is a symmetric matrix whose entries are
\begin{equation}
\label{eq:ERprocess}
      A_{i,j}^{(p)} = \left\{   \begin{array}{ll}
                              0 & \text{if $i=j$} \\
                              \IND_{U_{i,j}<p} & \text{if $1\le i< j \le n$} \\
                               \IND_{U_{i,j}<p} & \text{if $1\le j< i \le n$}~,
                             \end{array} \right.
\end{equation}
where $(U_{i, j})_{1 \le i < j \le n}$ are independent random
variables, uniformly distributed on $[0, 1]$
and $\IND$ stands for the indicator function.
We call the family of random matrices $(A_p)_{p\in [0,1]}$ the \emph{Erd\H{o}s-R\'enyi random graph process}.

Spectral properties of adjacency matrices of random graphs have received considerable attention, see
F{\"u}redi and Koml{\'o}s \cite{FuKo81},
Krivelevich and Sudakov \cite{KrSu03},
Vu \cite{Vu05},
Erd\H{o}s, Knowles,  Yau,  and Yin \cite{ErKnYaYi13},
Benaych-Georges, Bordenave, and Knowles \cite{BeBoKn17a, BeBoKn17b},
Jung and Lee \cite{JuLe17},
Tran, Vu, and Wang \cite{TrVuWa13},
among many other papers.

In this paper we are primarily concerned with concentration properties of the spectral norm $\|A_p\|$
of the adjacency matrix. It follows from a general concentration inequality 
of Alon, Krivelevich, and Vu \cite{AlKrVu01}
for the largest eigenvalue
of symmetric random matrices with bounded independent entries that
for all $n\ge 1$, $p\in [0,1]$, and $t>0$,
\begin{equation}
\label{eq:alkrvu}
\PROB\left\{ \left| \|A_p\| - \EXP \|A_p\| \right| > t \right\}
\le 2e^{-t^2/32}~.
 \end{equation}
In particular,  $\var(\|A_p\|)\le C$ for a universal constant $C$. (One may take $C=16$, see \cite[Example 3.14]{BoLuMa13}.)
Krivelevich and Sudakov \cite{KrSu03} who studied the asymptotic value
of $\E\|A_p\|$ raised the question whether it is possible to 
improve (\ref{eq:alkrvu}).
As an application of our techniques we settle this question for non-sparse graphs. Moreover, we strengthen (\ref{eq:alkrvu}) in two different ways.

Our main result concerns the \emph{uniform} concentration of the spectral norm. 
In particular, first we prove that there exists a universal constant $C$ such that
\[
 \EXP  \sup_{p\ge C\log n/n}\left| \|A_p\| - \EXP \|A_p\| \right| \le C
\]
(see Theorem \ref{thm:unif} below).  
Informally, this result means that as we add new edges in
  the Erd\H{o}s-R\'enyi graph process, the value $\left|\|A_p\| - \EXP
    \|A_p\|\right|$ is never greater (up to an absolute constant
  factor) than the same value calculated for just one concrete random
  graph $G(n, \frac{1}{2})$. The proof of this result is based on an
  extension of the Dvoretzky-Kiefer-Wolfowitz (DKW) inequality (we
  refer to \cite{Mas90} for the state-of-the-art form) for particular
  functions of independent random variables.
For the entire
range $p\in [0,1]$, we are able to
prove a simple but slightly weaker inequality
\[
 \EXP  \sup_{p \in [0,1]}\left| \|A_p\| - \EXP \|A_p\| \right| \le
 C\sqrt{\log \log n}
\]
for a constant $C$ (Proposition \ref{prop:sqrtloglogn}). We also prove the tail bound of the form
\begin{equation}
\label{uniftailbound}
\PROB\left\{ \sup_{p\ge C\log n/n} \left| \|A_p\| - \EXP \|A_p\| \right| > t \right\}
\le e^{-t^2/C}~,
\end{equation}
which is a uniform version of the sub-Gaussian inequality (\ref{eq:alkrvu}) and has the same form up to absolute constant factors.  We leave open the question whether the restriction to the range $p\in
\left[C\frac{\log n}{n},1\right]$ is necessary for uniform
concentration. However, we also discuss very sparse regimes (i.e., when $p \ll \frac{1}{n}$).

Note that it follows from the Perron-Frobenius theorem that the spectral norm of $A_p$ equals the largest
eigenvalue of $A_p$, that is, $\|A_p\|=\lambda_p$. We use both
interchangeably throughout the paper, depending on the particular interpretation that is
convenient.

Our proofs hinge crucially on the so-called \emph{delocalization} property of
the eigenvector corresponding to the largest eigenvalue (see Erd\H{o}s, Knowles,  Yau,  and Yin \cite{ErKnYaYi13},
Mitra \cite{Mit09}), that is, the fact that the normalized eigenvector
corresponding to the largest eigenvalue is close, 
in a certain sense, to the vector $(1/\sqrt{n},\ldots,1/\sqrt{n})$. We
provide delocalization
bounds for the top eigenvector of $A_p$ tailored to our needs (Lemma
\ref{lem:weakdeloc})
and a uniform delocalization inequality (Lemma \ref{lem:unifdeloc}). An important fact is that some known delocalization bounds hold with probability $1 - \frac{C}{n^{\alpha}}$ (as in \cite{Mit09}) or with quasi-polynomial probability $1 - C\exp(-c(\log n)^{\beta})$ (see e.g. \cite{TrVuWa13} or Theorem 2.6 in \cite{ErKnYaYi13} ), where any choice of the parameter $\beta$ greater than zero is responsible for extra logarithmic factors, making these results not applicable in our case. So, to obtain tight concentration results we prove delocalization bounds which hold with the exponential probability of the form $1 - C\exp(-cnp)$ (up to logarithmic factors), which is significantly better in the regime when $p \gg \frac{\log n}{n}$.

As an application of our techniques, we prove sub-Gaussian inequalities for moments of $\|A_p\|$ of higher 
order (up to order approximately $np$). The precise statement is given in Theorem \ref{thm:moment} in 
Section \ref{sec:moment} below. In particular, we show that, for small values of $p$, $\|A_p\|$ is significantly more concentrated than 
what the general bound \eqref{eq:alkrvu} suggests. This technique implies, in particular, that there exists a universal constant $C$ such that
\begin{equation}
\label{varianceineqality}
   \var(\|A_p\|)\le Cp
\end{equation}
for all $n$ and $p\ge C\log^3 n/n$.

The rest of the paper is organized as follows. In Section
\ref{sec:results} we formalize and discuss the results of the paper.
The proofs are presented in Section \ref{sec:proofs}.
\section{Results}
\label{sec:results}

\subsection{Uniform concentration for the Erd\H{o}s-R\'enyi random graph process}

Next, we state our inequalities for the uniform concentration of the
spectral norm $\|A_p\|$---or, equivalently, for the largest
eigenvalue $\lambda_p$ of the adjacency matrix $A_p$ defined by
(\ref{eq:ERprocess}). Our first result shows that 

\begin{theorem}
\label{thm:unif}
There exists a constant $C$ such that, for all $n$,
\[
\E\sup_{p \in \left[\frac{64\log n}{n}, 1\right]}\left|\lambda_{p} - \E\lambda_{p}\right| \le C~.
\]
Moreover, for all $t \ge 2C$ ,
\[
\PROB\left\{\sup_{p \in \left[\frac{64\log n}{n}, 1\right]}\left|\lambda_{p} - \E\lambda_{p}\right| \ge t\right\} \le \exp(-t^2/128)~.
\]
\end{theorem}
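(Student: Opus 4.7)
The plan is to view $Z := \sup_{p \in I} |\lambda_p - \E\lambda_p|$, with $I = [64\log n/n, 1]$, as a function of the $N=\binom{n}{2}$ independent uniforms $(U_{i,j})$, and to control $Z$ via an extension of the Dvoretzky--Kiefer--Wolfowitz inequality tailored to the fact that $\lambda_p$ is (a) monotone nondecreasing in $p$ by Perron--Frobenius applied to $A_p$, and (b) Lipschitz in each $U_{i,j}$ with small constants on the high-probability event of uniform delocalization.

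The first ingredient is the concentration of $Z$ around its expectation, via the entropy method. The key observation is that on the event $\Omega_0$ provided by Lemma \ref{lem:unifdeloc}, which has probability $1-n^{-c}$ with $c$ large and on which $\sup_{p \in I}\|v_p\|_\infty^2 \le C/n$, any change $U_{i,j} \mapsto U'_{i,j}$ flips at most one symmetric entry of $A_p$. By the variational characterization of the top eigenvalue applied to both $A_p$ and the perturbed matrix,
\[
|\lambda_p - \lambda'_p| \le 2\max\bigl(|v_p(i)\,v_p(j)|,\ |v'_p(i)\,v'_p(j)|\bigr) \le 2C/n
\]
uniformly in $p \in I$; the same bound therefore holds for $Z$. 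Since $\sum_{i,j}(2C/n)^2 = N\cdot 4C^2/n^2 = O(1)$, a bounded-differences / modified log-Sobolev argument in the style of \cite{BoLuMa13}, applied conditionally on $\Omega_0$, yields $\PROB\{|Z-\E Z|\ge t\} \le 2\exp(-ct^2)$ with an absolute-constant variance proxy. The event $\Omega_0^c$ contributes negligibly since $Z \le 2n$ deterministically and $\PROB(\Omega_0^c) = n^{-c}$ with $c$ large.

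The second, harder ingredient is the bound $\E Z = O(1)$. By monotonicity of $p \mapsto \lambda_p$ and $p \mapsto \E\lambda_p$, on a dyadic grid $p_0 < p_1 < \dots < p_K$ covering $I$,
\[
\sup_{p \in [p_k, p_{k+1}]}(\lambda_p - \E\lambda_p) \le (\lambda_{p_{k+1}} - \E\lambda_{p_{k+1}}) + (\E\lambda_{p_{k+1}} - \E\lambda_{p_k}),
\]
and symmetrically for the reverse one-sided supremum. A crude union bound relying only on \eqref{eq:alkrvu} at each grid point gives only the weaker $\sqrt{\log\log n}$ estimate of Proposition \ref{prop:sqrtloglogn}. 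To remove this factor I would approximate $\lambda_p$ by the quadratic form $M(p) := u^\top A_p u = (2/n)|E(G_p)|$ with $u = \mathbf{1}/\sqrt{n}$: the process $M(p)-\E M(p) = (2N/n)(F_N(p)-p)$, where $F_N$ is the empirical CDF of the $N$ uniforms, falls directly under the classical DKW inequality of Massart \cite{Mas90}, giving $\E\sup_p |M(p)-\E M(p)| = O(\sqrt{N}/n) = O(1)$. The residual $R(p) := \lambda_p - M(p) \ge 0$ is controlled uniformly using quantitative delocalization: writing $v_p = \alpha_p u + \beta_p w_p$ with $w_p\perp u$, $\|w_p\|=1$, one has $R(p) = \beta_p^2\bigl(w_p^\top A_p w_p - M(p)\bigr) + 2\alpha_p\beta_p\,u^\top A_p w_p$, and uniform control on $\beta_p$ and $u^\top A_p w_p$ (coming from delocalization and concentration of the degree vector around $(n-1)p\,\mathbf{1}$) yields $\sup_p |R(p)-\E R(p)| = O(1)$, again via a bounded-differences argument with $O(1/n)$ local constants.

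The main obstacle is precisely this bound on $\E Z$: concentration around the mean is a clean application of the entropy method once uniform delocalization is in hand, but the mean bound requires the ``DKW-style'' comparison with the empirical-CDF process $M(p)$ and uniform control of the residual $R(p)$. The restriction $p \ge 64\log n/n$ is precisely the threshold above which uniform delocalization (Lemma \ref{lem:unifdeloc}) holds with polynomially small failure probability; this is exactly what makes the local $1/n$ Lipschitz constants available in the entropy method and what justifies the approximation $v_p \approx u$ uniformly in the residual analysis.
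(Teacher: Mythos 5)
Your plan for the mean bound is essentially the paper's: approximate $\lambda_p$ by the quadratic form $\frac{\ol{1}^T}{\sqrt n}A_p\frac{\ol{1}}{\sqrt n}$, which is the empirical CDF process of the $\binom n2$ uniforms and is handled by Massart's DKW bound, and control the residual through the uniform $\ell_2$ delocalization of Lemma \ref{lem:unifdeloc} combined with the uniform norm bound of Lemma \ref{lem:centered}, the point being the cancellation $\beta_p\lesssim 1/\sqrt{np}$ against $\|A_p-\E A_p\|\lesssim\sqrt{np}$. Two small remarks: on the good events the residual is \emph{deterministically} $O(1)$, so no further ``bounded-differences argument with $O(1/n)$ local constants'' is needed for it; what you do need (and omit) is a truncation step (the paper's $\ol\lambda_p$, $\ol A_p$) so that the bad events, where $\lambda_p$ can be of order $n$, contribute only $O(1)$ to the expectations.

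The genuine gap is in your concentration step. First, it rests on the claim that on the uniform delocalization event $\sup_{p\ge 64\log n/n}\|v_p\|_\infty^2\le C/n$. No such bound is available in this range: Lemma \ref{lem:unifdeloc} is an $\ell_2$ statement with radius $2896/\sqrt{np}$, which at $p\asymp\log n/n$ only gives $\|v_p\|_\infty\lesssim 1/\sqrt{\log n}$, and the $\ell_\infty$ bound $\|v_p\|_\infty\le 11/\sqrt n$ of Lemma \ref{lem:weakdeloc} requires $p\gtrsim\log^3 n/n$ and is not uniform in $p$. With the honest per-coordinate constant $c_{ij}\asymp 1/\log n$ your variance proxy is $\sum_{i<j}c_{ij}^2\asymp n^2/\log^2 n$, which is useless. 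Second, applying bounded differences or a modified log-Sobolev inequality ``conditionally on $\Omega_0$'' is not legitimate: conditioning on a non-product event destroys the independence of the $U_{i,j}$ on which these inequalities rely (and after resampling $U_{i,j}$ you would also need delocalization of the perturbed eigenvector, compounding the problem). The paper avoids both issues without any delocalization: letting $p^*$ be the (random) maximizer, one has $(Z-Z'_{i,j})_+\le 4|v^{i}_{p^*}v^{j}_{p^*}|$ simultaneously for all pairs, hence
\begin{equation*}
\sum_{1\le i\le j\le n}(Z-Z'_{i,j})_+^2\ \le\ 16\Bigl(\sum_i (v^{i}_{p^*})^2\Bigr)^2\ =\ 16 \qquad\text{almost surely,}
\end{equation*}
and the exponential Efron--Stein/entropy inequality (Example 6.8 of \cite{BoLuMa13}) then yields $\PROB\{Z-\E Z\ge t\}\le\exp(-t^2/32)$ unconditionally; combined with $\E Z\le C$ and $t\ge 2C$ this gives the stated $\exp(-t^2/128)$. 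Your tail argument should be replaced by this self-bounding device; as written it does not go through.
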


For the numerical constant, our proof provides the (surely suboptimal)
value $C=5\times10^8$.
Our proof is based on the fact that the normalized
eigenvector corresponding to the largest eigenvalue of $A_p$ stays
close to the vector $(1/\sqrt{n},\ldots, 1/\sqrt{n})$. In Lemma
\ref{lem:unifdeloc} we prove an $\ell_2$ bound that holds uniformly 
over intervals of the form $[q,2q]$ when $q\in [4\log n/n, 1/2]$. 
It is because of the restriction of the range of $q$ in the uniform delocalization 
lemma that we need to impose $p\ge 64\log n/n$ in Theorem
\ref{thm:unif}. We do not know whether the uniform concentration
bound holds over the entire interval $p\in [0,1]$. However, we are
able to prove the following, only slightly weaker bound.

\begin{proposition}
\label{prop:sqrtloglogn}
There exists a constant $C'$ such that, for all $n$,
\[
\E\sup\limits_{p \in [0, 1]}\left|\lambda_{p} -
\E\lambda_{p}\right| \le C'\sqrt{\log \log n}~.
\]
\end{proposition}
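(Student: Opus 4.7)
The plan is to reduce to the range already covered by Theorem \ref{thm:unif} and handle the very sparse tail $p\in[0,q^*]$ with $q^*=64\log n/n$ by a simple discretization exploiting monotonicity. By Theorem \ref{thm:unif}, $\E\sup_{p\in[q^*,1]}|\lambda_p-\E\lambda_p|\le C$. It remains to show $\E\sup_{p\in[0,q^*]}|\lambda_p-\E\lambda_p|\le C'\sqrt{\log\log n}$.

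The first ingredient is monotonicity. The coupling \eqref{eq:ERprocess} makes $A_p$ entry-wise non-decreasing in $p$, and since $A_p$ is symmetric with non-negative entries, the Perron--Frobenius theorem gives $\lambda_p=\|A_p\|=\rho(A_p)$ with $p\mapsto\lambda_p$ non-decreasing almost surely. Consequently $p\mapsto\E\lambda_p$ is non-decreasing and, by dominated convergence (using the trivial bound $\lambda_p\le n$), continuous on $[0,1]$. The second ingredient is an a priori polylogarithmic upper bound on $\E\lambda_{q^*}$. Since $\|A_p\|$ is bounded by the maximum row sum, i.e., the maximum degree $\Delta_{\max}(G(n,p))$, and each degree is $\mathrm{Binomial}(n-1,p)$ with mean at most $64\log n$ when $p\le q^*$, a standard Chernoff bound combined with a union bound over the $n$ vertices yields $\E\lambda_{q^*}\le\E\Delta_{\max}\le C_1\log n$.

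By continuity of $\E\lambda_p$, choose a partition $0=p_0<p_1<\dots<p_N=q^*$ with $\E\lambda_{p_i}-\E\lambda_{p_{i-1}}\le 1$ for each $i$; the step just above allows $N\le C_1\log n+1$. For any $p\in[p_i,p_{i+1}]$, monotonicity of both $\lambda_p$ and $\E\lambda_p$ gives
\[
\lambda_{p_i}-\E\lambda_{p_{i+1}}\le\lambda_p-\E\lambda_p\le\lambda_{p_{i+1}}-\E\lambda_{p_i},
\]
so that $|\lambda_p-\E\lambda_p|\le\max_{0\le j\le N}|\lambda_{p_j}-\E\lambda_{p_j}|+1$. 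Now apply the Alon--Krivelevich--Vu inequality \eqref{eq:alkrvu} together with a union bound over the $N+1$ points:
\[
\PROB\Bigl\{\max_{0\le j\le N}|\lambda_{p_j}-\E\lambda_{p_j}|>t\Bigr\}\le 2(N+1)e^{-t^2/32}.
\]
Integrating the tail yields $\E\max_j|\lambda_{p_j}-\E\lambda_{p_j}|\le C_2\sqrt{\log N}\le C_3\sqrt{\log\log n}$, which combined with the $+1$ slack and with Theorem \ref{thm:unif} finishes the proof.

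There is no real obstacle; the only routine technicality is the a priori degree-based bound on $\E\lambda_{q^*}$. It is worth noting where the $\sqrt{\log\log n}$ loss comes from: the uniform concentration on $[q^*,1]$ is $O(1)$ by Theorem \ref{thm:unif}, whereas on the sparse tail $[0,q^*]$ we afford only a crude chaining-by-discretization argument, and the union bound over a net of size polylogarithmic in $n$ contributes precisely $\sqrt{\log\log n}$. Removing this factor would presumably require a delocalization-type analysis in the very sparse regime analogous to the one underlying Theorem \ref{thm:unif}, which the authors leave open.
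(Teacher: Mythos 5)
Your proposal is correct and follows essentially the same route as the paper: discretize $[0,64\log n/n]$ according to unit increments of the nondecreasing function $\E\lambda_p$, control intermediate $p$ via monotonicity, bound the number of grid points by $O(\log n)$ through an a priori bound on $\E\lambda_{64\log n/n}$, and finish with the sub-Gaussian bound \eqref{eq:alkrvu} plus a union bound over the grid. The only cosmetic differences are that the paper bounds $\E\lambda_{p_M}$ via the Bandeira--van Handel inequality rather than the maximum degree, and optimizes over the increment size $\eps$ instead of fixing it to $1$.
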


The proof of Proposition \ref{prop:sqrtloglogn} uses direct approximation
arguments to handle the interval $p\in [0, 64\log n/n]$. In
particular, we show that
\[
\E\sup\limits_{p \in [0, 64\log n/n]}|\lambda_{p} - \E\lambda_{p}| \le 5\sqrt{16 + 2\log\log n}~,
\]
which, combined with Theorem \ref{thm:unif} implies Proposition \ref{prop:sqrtloglogn}.  As a second extension we consider the sparse regime when $p \ll \frac{1}{n}$.
\begin{proposition}
\label{prop:verysp}
Fix $k \in \mathbb{N}, k \ge 2$. There is a constant $C_k$ (its value may be extracted from the proof) which depends only on $k$ such that
\[
\E\sup\limits_{p \in [0, n^{-k/(k - 1)}]}\left|\lambda_{p} -
\E\lambda_{p}\right| \le C_k.
\]
\end{proposition}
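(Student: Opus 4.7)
The plan is to use the monotonicity of the coupled family $p \mapsto A_p$ from (\ref{eq:ERprocess}) to collapse the supremum in $p$ to its right endpoint, and then to bound that endpoint by a crude maximum-degree estimate which is tight enough in this very sparse regime.

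First I would observe that, in the coupling (\ref{eq:ERprocess}), each entry $A_{i,j}^{(p)} = \IND_{U_{i,j} < p}$ is a non-decreasing function of $p$, so $p \mapsto A_p$ is entrywise non-decreasing. Since every $A_p$ has non-negative entries, the Perron--Frobenius variational formula $\lambda_p = \max_{x \ge 0,\,\|x\|_2 = 1}\inr{x, A_p x}$ shows that $p\mapsto \lambda_p$ is almost surely non-decreasing. Writing $P = n^{-k/(k-1)}$, the triangle inequality together with monotonicity gives
\[
\sup_{p \in [0, P]} \bigl|\lambda_p - \E \lambda_p\bigr| \;\le\; \sup_{p \in [0, P]} \lambda_p + \sup_{p \in [0, P]} \E \lambda_p \;=\; \lambda_P + \E \lambda_P,
\]
so after taking expectations it is enough to bound $\E \lambda_P$ by a constant depending only on $k$.

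For the second step I would use the classical inequality $\lambda_P \le \Delta_P$, where $\Delta_P$ denotes the maximum degree of $G(n, P)$. Since $nP = n^{-1/(k-1)} \le 1$, the Binomial tail yields $\PROB(\deg_i \ge d) \le (nP)^d / d!$, and a union bound over the $n$ vertices gives $\PROB(\Delta_P \ge d) \le n^{1 - d/(k-1)}/d!$. Splitting $\E \Delta_P = \sum_{d \ge 1} \PROB(\Delta_P \ge d)$ at $d = k - 1$---estimating the summand by $1$ for $d \le k - 2$, and noting that the exponent $1-d/(k-1)$ of $n$ is non-positive for $d \ge k - 1$ so the summand is at most $1/d!$ there---one obtains
\[
\E \Delta_P \;\le\; (k - 2) + \sum_{d = k - 1}^{\infty} \frac{1}{d!} \;\le\; k - 2 + e,
\]
and combining the two steps gives the explicit constant $C_k = 2(k - 2 + e)$.

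I do not see a substantive obstacle. The argument rests only on the monotonicity of $\lambda_p$ along the coupled process, the elementary bound $\lambda_p \le \Delta_p$, and a sparse union bound; no delocalization machinery is required, unlike in Theorem \ref{thm:unif} or Proposition \ref{prop:sqrtloglogn}. The single point deserving care is the monotonicity of $\lambda_p$, which holds along the coupling precisely because each $A_p$ has non-negative entries, a feature specific to (unsigned) adjacency matrices.
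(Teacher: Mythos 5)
Your proposal is correct, but it reaches the endpoint bound $\E\lambda_{P}$ (with $P = n^{-k/(k-1)}$) by a genuinely different route than the paper. Both arguments begin identically: monotonicity of $\lambda_p$ along the coupling (\ref{eq:ERprocess}) reduces everything to showing $\E\lambda_{P} \le C_k/2$. From there the paper argues structurally: using Cayley's formula and a result of Bollob\'as it shows that with high probability $G(n,P)$ is a forest whose component trees have at most $k$ vertices, so that $\lambda_P \le \sqrt{k-1}$ on that event, and it controls the exceptional event via the crude $O(\sqrt{\log n})$ bound of Lemma \ref{basic} together with \eqref{eq:alkrvu}. You instead bound $\lambda_P$ by the maximum degree $\Delta_P$ and compute $\E\Delta_P \le (k-2) + \sum_{d\ge k-1} 1/d!$ directly from the binomial tail $\PROB(\deg_i \ge d) \le (nP)^d/d!$ and a union bound, using that $nP = n^{-1/(k-1)} \le 1$ and that the exponent $1 - d/(k-1)$ is nonpositive once $d \ge k-1$. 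Your computation checks out (including the identity $\E\Delta_P = \sum_{d\ge 1}\PROB\{\Delta_P \ge d\}$ and the validity of the bound uniformly in $n$), and it is more elementary and self-contained: no tree-counting, no appeal to Lemma \ref{basic} or to \eqref{eq:alkrvu}. What the paper's forest argument buys in exchange is a better dependence on $k$ in the constant (of order $\sqrt{k}$ rather than your $2(k-2+e)$, which grows linearly), though this is immaterial for the statement as formulated, since only some $C_k$ depending on $k$ is required.
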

\begin{remark}
A simple inspection of the proof of the concentration result of
Theorem \ref{thm:unif} shows that a tail inequality similar to the
second inequality of Theorem \ref{thm:unif} holds also for the range
$p \in [0, n^{-k/(k - 1)}]$. In this case the constant factors may depend on the choice of $k$.
\end{remark}

\subsection{Moment inequalities for the spectral norm}
\label{sec:moment}

As an application of our techniques we show that typical deviations of $\|A_p\|$ from its
expected value are of the order of $\sqrt{p}$. This is in accordance with the asymptotic normality
theorem of F{\"u}redi and Koml{\'o}s \cite{FuKo81}. However, while the result of \cite{FuKo81} holds
for fixed $p$ as $n\to \infty$, the theorem below is non-asymptotic. In particular, it holds for 
$p=o(1)$ as long as $np$ is at least of the order of $\log^3 (n)$.
 Note that the general non-asymptotic concentration inequality of \cite{AlKrVu01} only implies that typical
deviations are $O(1)$ and the question of possible improvements was raised in \cite{KrSu03}.

\begin{theorem} 
\label{thm:moment}
There exist constants $c,C$ such that for every  $k\in \left(2, \frac{c\left(\frac{\log(np)}{\log n}\right)^2p(n - 1)-\log(8(n-1))}{\log(\frac{1}{p})+\log(11^5/4)}\right]$,
\[
\left[\EXP\left( \|A_p\|- \EXP \|A_p\| \right)^k_+ \right]^{1/k} \le (Ckp)^{\frac{1}{2}}
\]
and
\[
\left[\EXP \left( \|A_p\|- \EXP \|A_p\| \right)^k_- \right]^{1/k} \le (C'kp)^{\frac{1}{2}}~.
\]
In particular, for some absolute constant $\kappa > 0$ it holds for all $n$ and $p \ge \kappa\log^3(n)/n$,
\[
   \var(\|A_p\|) \le Cp~.
\]

\end{theorem}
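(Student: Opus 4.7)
The plan is to derive the moment inequality via a moment form of the entropy method in the style of Boucheron, Bousquet, Lugosi and Massart, using the variational characterization of $\lambda_p$ to express the resulting variance proxy as an eigenvector quantity, and then bound this quantity by appealing to Lemma \ref{lem:weakdeloc}. Viewing $Z=\|A_p\|$ as a function of the independent Bernoulli coordinates $(A_{i,j}^{(p)})_{1\le i<j\le n}$, let $A'$ denote the matrix obtained from $A_p$ by resampling the $(i,j)$ entry together with its symmetric partner, and set $Z'=\|A'\|$. The relevant moment inequality reads
$$\|(Z-\E Z)_\pm\|_k \le \sqrt{K k\,\|V^\pm\|_{k/2}},\qquad V^\pm=\sum_{i<j}\E\!\left[(Z-Z')_\pm^2 \mid A_p\right],$$
reducing the task to moment bounds on the variance proxies $V^\pm$.

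Let $v=(v_1,\ldots,v_n)$ denote the non-negative Perron--Frobenius top eigenvector of $A_p$. Since $v^\top A_p v=Z$ while $v^\top A' v\le Z'$, and since the top eigenvalue of a non-negative matrix is monotone under entrywise perturbations, one obtains $(Z-Z')_+^2\le 4(v_iv_j)^2\IND[A_{i,j}^{(p)}=1]$. Integrating over the resampled entry and using the identity $\sum_{i,j}A_{i,j}^{(p)}v_iv_j=Z$ together with $v_iv_j\ge 0$ produces $V^+\le 2(1-p)\|v\|_\infty^2 Z$; an entirely analogous (though slightly more delicate) computation, using instead the top eigenvector of $A'$, yields $V^-\le 2p\|v\|_\infty^2 Z$ up to a harmless constant factor.

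By Lemma \ref{lem:weakdeloc} there is an event $\mathcal E$ of probability at least $1-\exp(-c(\log(np)/\log n)^2 np)$ on which $\|v\|_\infty^2\le C/n$, while a standard maximum-degree bound gives $Z\le C'np$ on the same event. Consequently $V^\pm\le C''p$ pointwise on $\mathcal E$, so that applying the moment inequality to $Z\,\IND_{\mathcal E}$ yields $[\E(Z-\E Z)_\pm^k]^{1/k}\le(Ckp)^{1/2}$ provided that the contribution of $\mathcal E^c$---controlled crudely by $n\,\PROB(\mathcal E^c)^{1/k}$---is absorbed into this bound. Matching the two contributions reproduces precisely the admissible range of $k$ in the statement, with the factor $(\log(np)/\log n)^2 np$ coming from the delocalization failure probability and the factor $\log(1/p)$ from the metric-entropy parameter inherited from the proof of Lemma \ref{lem:weakdeloc}; the variance inequality then follows by taking $k=2$, which lies in the admissible range once $np\gtrsim\log^3 n$. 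The main obstacle is the execution of this last step: $V^\pm$ is controlled only on the high-probability delocalized event, so the entropy inequality must be run through a careful truncation whose error is balanced delicately against the target moment bound, and this balance is what forces the somewhat intricate upper limit on $k$ in the statement.
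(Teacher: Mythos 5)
Your overall architecture coincides with the paper's: apply the Boucheron--Bousquet--Lugosi--Massart moment inequalities to $Z=\|A_p\|$ viewed as a function of the $\binom{n}{2}$ independent entries, bound the variance proxy through the weak delocalization Lemma \ref{lem:weakdeloc}, and absorb the complement of the good event into the admissible range of $k$. However, two steps of your sketch have genuine gaps. First, the truncation. You propose to apply the moment inequality to $Z\,\IND_{\mathcal E}$ and to absorb the bad event via $n\,\PROB(\mathcal E^c)^{1/k}$, and you yourself flag this as the unexecuted obstacle. This organization is problematic: resampling one entry can flip $\IND_{\mathcal E}$, so the variance proxy of the truncated variable is not controlled by your bound $V^\pm\le C''p$ on $\mathcal E$, and the crude worst-case factor $n$ makes the balancing condition involve $\log(n/p)$ rather than $\log(1/p)$, so it does not ``precisely'' reproduce the stated range of $k$ (for $p$ of constant order it loses a factor of order $\log n$). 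The paper truncates at the level of the variance proxy instead: it applies \eqref{eq:boboluma} to $Z$ itself, notes the universal bound $V_+\le 4$, and writes $\E[V_+^{k/2}]\le (11^5p)^{k/2}+4^{k/2}\bigl(\PROB\{\overline{E_1}\}+\PROB\{\overline{E_2}\}\bigr)$, which is what produces exactly the stated constraint on $k$ (the $\log(1/p)$ in the denominator comes from comparing $4$ with $11^5p$, not from any entropy parameter in Lemma \ref{lem:weakdeloc}).

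Second, the lower tail. You invoke a symmetric inequality with $V^-=\sum_{i<j}\E'[(Z-Z')_-^2]$ (legitimate in principle, being \eqref{eq:boboluma} applied to $-Z$), but bounding $V^-$ requires delocalization of the top eigenvector $v'$ of each resampled matrix in which the entry $(i,j)$ is forced to $1$; Lemma \ref{lem:weakdeloc} speaks only about $v_p$, so you would need an additional conditioning or union-bound argument over the $\binom{n}{2}$ modified matrices, which you do not supply. Moreover the asserted intermediate bound $V^-\le 2p\|v\|_\infty^2 Z$ does not follow from the computation you sketch: the analogue of your $V^+$ argument gives $V^-\le 4p\sum_{i<j}(v'_iv'_j)^2(1-A_{i,j})\lesssim p\,n\,\max_{i<j}\|v'\|_\infty^2$, with no factor $Z$ (a correct version still yields $V^-\lesssim p$ on the good events, so the route is repairable, but as written the step is wrong). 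The paper sidesteps all of this by using the lower-tail inequality \eqref{eq:bobolumalower}, which involves only $V_+$ and $M=\max_{i<j}(\lambda_p-\lambda'_{i,j})_+$, both expressible through the eigenvector $v_p$ of $A_p$ alone; on the delocalization event $M\lesssim 1/n$, and the extra condition this imposes on $k$ is checked directly. Your bound of $V^+$ via $v^TA_pv=Z$ together with a maximum-degree bound on $Z$, in place of the paper's Bernstein bound on $\sum_{i<j}A_{i,j}$, is a fine minor variant; the two issues above are where the proof as proposed does not yet go through.
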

It is natural to ask whether the condition $p \ge \kappa\log^3(n)/n$ \footnote{Our analysis implies in fact a slightly better factor $\frac{\log^3(n)}{n(\log\log (n))^2}$ instead of $\frac{\log^3(n)}{n}$.}
is necessary. Although we believe that $\log^3(n)$ instead of the
lower powers of $\log n$ is only an artifact of our technique, the fact that the inequality $\var(\|A_p\|) \le Cp$
cannot hold for all values of $p$ is easily seen by taking $p=c/n^2$ for
a positive constant $c$. In this case, the probability that the graph $G(n,p)$
is empty is bounded away from zero. In that case $\|A_p\|=0$. On the 
other hand, with probability bounded away from zero, the graph
$G(n,p)$ contains a single edge, in which case $\|A_p\|=1$. Thus,
for  $p=c/n^2$, $\var(\|A_p\|) =\Omega(1)$, showing that the bound (\ref{eq:alkrvu}) is sharp in this range. 
Understanding the concentration properties of $\|A_p\|$ in the range
$n^{-2} \ll p \ll \log^3(n)/n$ is an intriguing open question.

The proof of Theorem \ref{thm:moment} is presented in Section
\ref{sec:momentproof}. The proof reveals that for the values of the
constants one may take $\kappa=2\times 835^2$, $C=966306$, $C'=1339945$, and $c=1/9408$. However,
these values have not been optimized. In the rest of this discussion we assume these numerical values. %

Using the moment bound with $k=t^2/(2Cp)$, Markov's inequality implies that for all
$0< t\le 2\sqrt{Cc}p\sqrt{n-1}\log(np)/(\log n\log(1/p))$,
\begin{equation}
\label{concentration}
    \PROB\left\{ |\|A_p\| - \EXP \|A_p\|| \ge t\right\}  \le 2^{-t^2/(2Cp)}~.
\end{equation}
This result improves \eqref{eq:alkrvu} in the regime when $t \ll p\sqrt{n}$ with some extra logarithmic factors and may be complemented by \eqref{eq:alkrvu} for the remaining values of $t$. Moreover, a simple inspection of the proof of Theorem \ref{thm:moment} shows that it may be extended in a way such that it is always not worse than the tail of \eqref{eq:alkrvu} for all $t \ge 0$.
The proof of this Theorem is based on 
general moment inequalities of Boucheron, Bousquet, Lugosi, and Massart \cite{BoBoLuMa04}  
(see also \cite[Theorems 15.5 and 15.7]{BoLuMa13}) that state that if $Z=f(X_1,\ldots,X_n)$ is a real random variable that
is a function of the independent random variables $X_1,\ldots,X_n$, then for all $k\ge 2$,
\begin{equation}
\label{eq:boboluma}
    \left[\EXP\left( Z- \EXP Z \right)^k_+ \right]^{1/k} \le \sqrt{3k} \left(\left[{V_{+}}^{k/2}\right] \right)^{1/k}~,
\end{equation}
and
\begin{equation}
\label{eq:bobolumalower}
\left[\EXP\left( Z- \EXP Z \right)^k_- \right]^{1/k}
\le
\sqrt{4.16 k}
\left(
\left(\EXP\left[ {V_{+}}^{k/2}\right] \right)^{1/k} \vee \sqrt{k} \left(\EXP \left[M^k\right]\right)^{1/k}\right)~,
\end{equation}
where $\vee$ denotes the maximum and the random variable $V_{+}$ is defined as
\[
   V_{+} = \EXP' \sum_{i=1}^n (Z-Z_i')_+^2~.
\]
Here $Z_i'=f(X_1,\ldots,X_{i-1},X_i',X_{i+1},\ldots,X_n)$ with $X_1',\ldots,X_n'$ being independent copies of $X_1,\ldots,X_n$
and $\EXP'$ denotes expectation with respect to $X_1',\ldots,X_n'$. Moreover,
\[
M= \max_i (Z-Z_i')_+~.
\]
Recall also that, by the Efron-Stein inequality (e.g., \cite[Theorem 3.1]{BoLuMa13})
\begin{equation}
\label{efstein}
   \var(Z) \le \EXP V_{+}~.
\end{equation}
The proof of Theorem \ref{thm:moment} is based on
(\ref{eq:boboluma}), applied  for the random variable $Z=\|A_p\|$.
In order to bound moments of the random variable $V_{+}$, we make use of the 
fact that the eigenvector of $A_p$ corresponding to the largest
eigenvalue is near the vector $(1/\sqrt{n},\ldots, 1/\sqrt{n})$. 
An elegant way of proving such results appears in Mitra \cite{Mit09}. We follow
Mitra's approach though we need to modify his arguments in order to achieve stronger probabilistic guarantees
for weak $\ell_{\infty}$ delocalization bounds. In Lemma
\ref{lem:weakdeloc} we provide the bound we need for the
proof of Theorem \ref{thm:moment}.

\section{Proofs}
\label{sec:proofs}

\subsection{Proof of Theorem \ref{thm:unif}}
\label{sec:unifproof}

We begin by noting that, if $p\le q$, then $A_q$ is element-wise
greater than or equal to $A_p$ and therefore $\|A_p\| \le \|A_q\|$ whenever $p \le q$.
(see Corollary $1.5$ in \cite{BePl94}).


We start with a lemma for the expected spectral norm for a sparse \ER graph.
Since the largest eigenvalue of the adjacency matrix is always bounded
by the maximum degree of the graph, $\EXP \|A_{\frac{1}{n}}\|$ is at most
of the order $\log n$. The next lemma improves this naive bound to
$O(\sqrt{\log n})$.
With more work, it is possible to improve the rate to
$\sqrt{\frac{\log n}{\log\log n}}$ 
(see the asymptotic result in \cite{KrSu03}). However, this slightly weaker version is sufficient for our purposes.

\begin{lemma}
\label{basic}
For all $n \ge 3$,
\[
\E\|A_{\frac{1}{n}}\| \le 173\sqrt{\log n}~. 
\]
\end{lemma}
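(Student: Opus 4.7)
The plan is to combine a standard spectral inequality with Chernoff bounds on vertex degrees. Since $A_{1/n}$ has non-negative entries and is symmetric, $\|A_{1/n}\|^2 = \|A_{1/n}^2\|$ is bounded by the maximum row sum of $A_{1/n}^2$, and a direct calculation gives $\sum_j (A_{1/n}^2)_{ij} = \sum_{k\sim i}\deg(k) =: X_i$. Hence $\|A_{1/n}\|^2 \le \max_{i} X_i$, and by Jensen's inequality $\E\|A_{1/n}\| \le \sqrt{\E\max_i X_i}$; it therefore suffices to show that $\E\max_i X_i$ is of order $\log n$ with a sufficiently good constant.

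For each vertex $i$, decompose $X_i = d_i + W_i$, where $d_i = \deg(i)$ and $W_i$ counts walks of length $2$ starting at $i$ and ending at some vertex other than $i$. Conditioning on the random neighborhood $N(i)$ of size $d_i = d$, the variable $W_i$ splits into two \emph{independent} binomial contributions: edges within $N(i)$ counted twice, distributed as $\mathrm{Bin}\bigl(\binom{d}{2},1/n\bigr)$, plus edges from $N(i)$ to $V \setminus (N(i) \cup \{i\})$ counted once, distributed as $\mathrm{Bin}(d(n-1-d),1/n)$. A direct moment generating function estimate then yields $\E[e^{tW_i}\mid N(i)] \le \exp(C t\, d_i)$ on a fixed small interval of $t$, and averaging over the Poisson-like $d_i \sim \mathrm{Bin}(n-1,1/n)$ (whose MGF is bounded by $\exp(e^t-1)$) gives $\E e^{tW_i} \le e^{C' t}$. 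A Chernoff bound then produces $\PROB(W_i \ge s) \le e^{C'' - c s}$, with an analogous (easier) sub-exponential bound for $d_i$.

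Consequently $\PROB(X_i \ge t) \le \PROB(d_i \ge t/2) + \PROB(W_i \ge t/2) \le 2 e^{C_1 - c_1 t}$. A union bound over $i \in [n]$ combined with tail integration starting from a threshold of order $\log n$ yields $\E \max_i X_i \le C_2 \log n$ for an explicit constant $C_2$, whence $\E\|A_{1/n}\| \le \sqrt{C_2 \log n}$; with the explicit values obtained along the way this is comfortably below $173\sqrt{\log n}$ for all $n \ge 3$ (the numerical factor has substantial slack). The main technical subtlety lies in handling the dependencies in $W_i$: edges within the random set $N(i)$ contribute to \emph{two} vertex degrees in $\sum_{k\sim i}\deg(k)$, so $W_i$ is not a sum of independent Bernoullis as it stands. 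Conditioning on $N(i)$ is what restores independence and turns each piece into a pure binomial amenable to classical Chernoff estimates.
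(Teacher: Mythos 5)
Your argument is correct, but it is genuinely different from the paper's. The paper centers first ($\E\|A_{1/n}\|\le \E\|A_{1/n}-\E A_{1/n}\|+1$), symmetrizes with an independent copy, and then invokes Corollary 3.6 of Bandeira and van Handel with $\alpha=3$, which yields $6+166\sqrt{\log n}$ in one line; the constant $173$ is simply inherited from that corollary. You instead use the elementary fact that for the symmetric nonnegative matrix $A=A_{1/n}$ one has $\|A\|^2=\|A^2\|\le\max_i\sum_j (A^2)_{ij}=\max_i\sum_{k\sim i}\deg(k)$, together with Jensen, and then control $\E\max_i X_i$ by conditional MGF/Chernoff bounds plus a union bound. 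The conditional decomposition is right: given $N(i)$ with $|N(i)|=d$, the two contributions are independent, $\mathrm{Bin}\bigl(\binom{d}{2},1/n\bigr)$ counted twice and $\mathrm{Bin}(d(n-1-d),1/n)$ counted once, and since $\binom{d}{2}/n\le d/2$ the conditional MGF is indeed $\exp(O(t)\,d)$ on a fixed interval of $t$; averaging over $d_i\sim\mathrm{Bin}(n-1,1/n)$ gives a bounded MGF (strictly $\exp(e^{Ct}-1)$ rather than $e^{C't}$, but this is immaterial), hence exponential tails with an $n$-independent rate, and the union bound with a threshold of order $\log n$ gives $\E\max_i X_i\le C_2\log n$. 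Carrying out the computation with, say, $t=1/2$ gives roughly $\E\max_i X_i\le 2\log n+15$, i.e.\ $\E\|A_{1/n}\|\le 4\sqrt{\log n}$ for $n\ge 3$, so your claim of ample slack relative to $173$ is justified (indeed your route gives a better constant than the paper's). What the paper's route buys instead is reusability: the same Bandeira--van Handel corollary is applied again (display (3.2)) to the \emph{centered} matrix $A_p-\E A_p$ for general $p\ge\log n/n$, where your row-sum bound on $A^2$ would not give the needed $O(\sqrt{np})$ order; for the specific statement of Lemma \ref{basic}, however, your elementary argument is complete in outline and sound, with only the explicit constants left to be written out.
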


\begin{proof}
First write
\[
\E\|A_{\frac{1}{n}}\| \le \E\|A_{\frac{1}{n}} - \E A_{\frac{1}{n}}\| + \|\E A_{\frac{1}{n}}\| \le \E\|A_{\frac{1}{n}} - \E A_{\frac{1}{n}}\| + 1~.
\]
Denote $B = A_{\frac{1}{n}} - \E A_{\frac{1}{n}}$ and let $B'$ be an
independent copy of $B$. Denoting by $\E'$ the expectation operator
with respect to $B'$, note that $\E' B' = 0$ and therefore, by
Jensen's inequality,
\[
\E\|B\| = \E\|B - \E'B'\| \le \E\|B - B'\|~.
\]
The matrix $B - B'$ is zero mean, its non-diagonal entries have
a symmetric distribution with variance $(2/n)(1 - 1/n)$ 
and all entries have absolute value bounded by $2$. 
Now, applying Corollary $3.6$ of Bandeira and van Handel \cite{Bava16} with $\alpha = 3$,
\[
\E\|B - B'\| \le e^{\frac{2}{3}}(2\sqrt{2} + 84\sqrt{\log n}) \le 6 +
166\sqrt{\log n}~.
\]
Thus,
\[
\E\|A_{\frac{1}{n}}\| \le 7 + 166\sqrt{\log n} \le 173\sqrt{\log n}~.
\]
\end{proof}


The next lemma and the uniform delocalization inequality of Lemma
\ref{lem:unifdeloc} (presented in Section \ref{sec:deloc}) are the crucial building blocks of the proof of
Theorem \ref{thm:unif}.

\begin{lemma}
\label{lem:centered}
For all $n$ and $q \in [\log n/n, \frac{1}{2}]$, 
\[
\PROB\left\{ \sup_{p \in [q, 2q]}\|A_p - \E A_p\| > 420\sqrt{nq}
\right\} \le e^{-nq/64}~.
\]
\end{lemma}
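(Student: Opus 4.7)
The plan is to discretize $[q,2q]$ with a fine grid of mesh $\delta$ and reduce the supremum to a finite union bound, exploiting pathwise monotonicity of the Erd\H os-R\'enyi process to control the oscillation between grid points. I would set $\delta = q/n$ and $p_j = q + j\delta$ for $j=0,1,\dots,n$, so $p_0=q$ and $p_n=2q$. For any $p \in [p_j, p_{j+1}]$, the identity
\[
A_p - \E A_p = (A_{p_j} - \E A_{p_j}) + (A_p - A_{p_j}) - (p-p_j)(\mathbf{1}\mathbf{1}^T - I),
\]
combined with $\|\mathbf{1}\mathbf{1}^T - I\| = n-1$, yields
\[
\|A_p - \E A_p\| \le \|A_{p_j} - \E A_{p_j}\| + \|A_p - A_{p_j}\| + q.
\]
Since $A_p - A_{p_j}$ is entrywise nonnegative and entrywise at most $A_{p_{j+1}} - A_{p_j}$ for $p \in [p_j, p_{j+1}]$, the Perron-Frobenius monotonicity recalled at the beginning of this section gives $\|A_p - A_{p_j}\| \le \|A_{p_{j+1}} - A_{p_j}\|$ pathwise. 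Hence
\[
\sup_{p\in[q,2q]}\|A_p - \E A_p\| \le \max_{0\le j \le n}\|A_{p_j} - \E A_{p_j}\| + \max_{0\le j<n}\|A_{p_{j+1}}-A_{p_j}\| + q,
\]
and it suffices to control each of the $2n+1$ spectral norms on the right.

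I would bound the expectations using the same tools as in Lemma \ref{basic}. The centered matrix $A_{p_j} - \E A_{p_j}$ has zero-mean entries bounded by $1$ with variance at most $2q$, so Bandeira and van Handel's Corollary 3.6 yields $\E\|A_{p_j} - \E A_{p_j}\| \le C_1\sqrt{nq}$, where I use $\log n \le nq$. For the increments, $A_{p_{j+1}}-A_{p_j}$ has the same distribution as $A_\delta$, and since $\delta = q/n \le 1/n$, coupling through the Erd\H os-R\'enyi process gives $\|A_\delta\| \le \|A_{1/n}\|$ pathwise, so $\E\|A_{p_{j+1}}-A_{p_j}\| \le 173\sqrt{\log n} \le 173\sqrt{nq}$ by Lemma \ref{basic}. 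The sub-Gaussian inequality (\ref{eq:alkrvu}), which applies equally to the centered spectral norm by the same entropy-method proof, then gives each of the $2n+1$ quantities a tail of the form $2\exp(-t^2/32)$ around its expectation. Taking $t^2 = nq/2 + 32\log(4n+2)$, which is at most a constant multiple of $nq$, a union bound over the $2n+1$ events produces probability at least $1-e^{-nq/64}$, and tracking the numerical constants yields the claimed bound $420\sqrt{nq}$.

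The main obstacle is the balance in the choice of $\delta$. The deterministic drift $\delta(n-1)\le q$ from the $(p-p_j)(\mathbf{1}\mathbf{1}^T - I)$ term must fit inside the target $\sqrt{nq}$ (which holds, since $q\le \sqrt{nq}$ whenever $nq\ge 1$), while the number of intervals must remain small enough that the union-bound logarithm $\log n$ is absorbed into the exponential budget $nq$. The pairing $\delta = q/n$ achieves both at once, and the combination of Bandeira-van Handel at the grid points with the sparse-regime bound of Lemma \ref{basic} on the increments is essential: a bound that did not exploit the sparsity of $A_\delta$ would contribute an extra $\sqrt{\log n}$ factor through the union bound and defeat the target.
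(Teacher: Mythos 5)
Your proposal is correct and follows essentially the same route as the paper's proof: discretize $[q,2q]$, use entrywise monotonicity of the coupled process (Perron--Frobenius) to control the oscillation between grid points by the norm of a sparse increment matrix, bound the increments via Lemma \ref{basic}, bound the grid points via symmetrization plus Bandeira--van Handel, and apply the concentration inequality \eqref{eq:alkrvu} at every grid point and increment before a union bound. The only differences are bookkeeping ones --- mesh $q/n$ with $n+1$ points instead of the paper's mesh $1/n$ with roughly $nq$ points, the drift handled as $(p-p_j)(n-1)\le q$ rather than $\|\E A_{1/n}\|\le 1$, and an explicit $\log(4n+2)$ budget in the deviation level --- none of which changes the argument.
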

\begin{proof}
By (\ref{eq:alkrvu}), for each fixed $p$ and
for all $t>0$, we have
\[
\PROB\left\{ \|A_p-\EXP A_p\| - \EXP \|A_p-\EXP A_p\| > t \right\}
\le e^{-t^2/32}~.
\]
On the other hand, using the same symmetrization trick as in Lemma
\ref{basic}, Corollary 3.6  of Bandeira, van Handel \cite{Bava16} 
implies that
for any $p \ge \log n/n$,
\begin{equation}
\label{bandeira}
\EXP \|A_p-\EXP A_p\| \le e^{\frac{2}{3}}(2\sqrt{2np} + 84\sqrt{\log n}) \le 170\sqrt{np}~.
\end{equation}
These two results imply
\[
\PROB\left\{ \|A_p-\EXP A_p\| > 172\sqrt{np} \right\}
\le e^{-np/8}~.
\]
Let now $q\ge \log n/n$ and for $i=0,1,\ldots,\lceil nq\rceil$,
define $p_i=q+i/n$. Then 
\begin{eqnarray*}
\sup_{p\in [p_i,p_{i+1}]} \left( \|A_p-\EXP A_p\| - \|A_{p_i}-\EXP
  A_{p_i}\| \right)
& \le &
\sup_{p\in [p_i,p_{i+1}]} \left( \|A_p-A_{p_i}\| + \|\EXP A_p-\EXP A_{p_i}\| \right)
\\
& = &
\sup_{p\in [p_i,p_{i+1}]} \left( \|A_p-A_{p_i}\| + \|\EXP A_{p-p_i}\| \right) \\
& = &
 \|A_{p_{i + 1}}-A_{p_i}\| + \|\EXP A_{1/n}\|  \\
 & \le &
 \|A_{p_{i + 1}}-A_{p_i}\| + 1 \\
& = &
\EXP \|A_{1/n}\| + \left( \|A_{p_{i + 1}}-A_{p_i}\|- \EXP \|A_{p_{i + 1}}-A_{p_i}\|\right) + 1
\\
& \le &
1 + 173\sqrt{\log n} + \sqrt{nq}\\
&\le & 176\sqrt{nq}
\end{eqnarray*}
with probability at least $1-e^{-nq/32}$, where we used Lemma
\ref{basic} and (\ref{eq:alkrvu}).
Thus, by the union bound, with probability at least
$1- nq e^{-nq/32}- nq e^{-np/8}
\ge 1-e^{-nq/64}$,
\begin{align*}
\sup_{p\in [q,2q]} \|A_p-\EXP A_p\|
&\le
\max_{i\in \{0,\ldots,\lceil nq\rceil\}} \|A_{p_i}-\EXP A_{p_i}\| + 176\sqrt{nq}
\\
& \le 172\sqrt{2nq} + 176\sqrt{nq} 
\\
&\le 420\sqrt{nq}.
\end{align*}
as desired.
\end{proof}

\medskip
\noindent
{\bf Proof of Theorem \ref{thm:unif}.}
Denote by $\ol{1}\in \R^n$ the vector whose components are all equal
to $1$. Let $B^n_2=\{x\in \R^n: \|x\|_2 \le 1\}$ be the unit Euclidean
ball. Define
the event $E_{1}$ that $v_{p} \in \frac{\ol{1}}{\sqrt{n}} +
\frac{2896}{\sqrt{np}}B^n_{2}$ for all $p \in \left[64\log n/n,1\right]$.
By Lemma \ref{lem:unifdeloc} (see Section \ref{sec:deloc} below), for $n\ge 7$,
\[
\PROB\{E_1\} \ge 1 - 4\sum\limits_{j = 0}^{\infty}\exp\left(-2^j \log n\right) \ge 1 -
4\sum\limits_{j = 0}^{\infty}\left(\frac{1}{n}\right)^{2^j} \ge 1 -
\frac{4}{n}\sum\limits_{j = 0}^{\infty}\left(\frac{1}{7}\right)^{j} =
1 - \frac{32}{7n}~.
\]
Now define the event $E_2$ that for all $p \in \left[\frac{64\log n}{n},
1\right]$, $\|A_p - \E A_{p}\| \le 420\sqrt{2np}$.
Similarly to the calculation above, by Lemma \ref{lem:centered}, $\PROB\{E_2\} \ge 1 - \frac{32}{7n}$.

Denoting by $S^{n-1}=\{x\in \R^n: \|x\|_2=1\}$ the Euclidean unit sphere in
$\R^n$, define
\[
\ol\lambda_{p}= \sup_{x \in S^{n - 1}} x^TA_px\IND_{E_1 \cap E_2} \quad
\text{and} \quad \ol{A}_{p}=A_{p}\IND_{E_2}~.
\]
Then we may write the decomposition 
\[
\ol\lambda_{p}=
\sup_{x \in \frac{\ol{1}}{\sqrt{n}} +
  \frac{2896}{\sqrt{np}}B^n_{2}} x^T\ol{A}_px =
\frac{\ol{1}}{\sqrt{n}}\ol{A}_p\frac{\ol{1}}{\sqrt{n}} + 2\sup_{z
  \in \frac{2896}{\sqrt{np}}B^n_{2}} z^{T}\ol{A}_p\left(\frac{\ol{1}}{\sqrt{n}}
+ \frac{z}{2}\right)~.
\]
 Then
\begin{eqnarray}
\label{eq:lambdaprime}
\lefteqn{
\sup_{p \in \left[\frac{64\log n}{n}, 1\right]}|\ol\lambda_{p} - \E\ol\lambda_{p}|
  } \nonumber \\
  &\le &
2\sup_{p \in \left[\frac{64\log n}{n}, 1\right]}\left|\sup_{z \in \frac{2896}{\sqrt{np}}B^n_{2}}(z^{T}\ol{A}_p(\frac{\ol{1}}{\sqrt{n}} + \frac{z}{2}))- \E\sup_{z \in \frac{2896}{\sqrt{np}}B^n_{2}}(z^{T}\ol{A}_p(\frac{\ol{1}}{\sqrt{n}} + \frac{z}{2}))\right|
\nonumber \\
& & + \sup_{p \in \left[\frac{64\log n}{n}, 1\right]}\left|{\frac{\ol{1}}{\sqrt{n}}}^T\ol{A}_p{\frac{\ol{1}}{\sqrt{n}}}- \E\frac{\ol{1}^{T}}{\sqrt{n}}\ol{A}_p\frac{\ol{1}}{\sqrt{n}}\right|. 
\end{eqnarray}
For the second term on the right-hand side of (\ref{eq:lambdaprime}), since $A_p - \ol{A}_p = A_p\IND_{\overline{E}_{2}}$ we have 
\begin{eqnarray*}
\lefteqn{
\E\sup_{p \in \left[\frac{64\log n}{n},
  1\right]}\left|{\frac{\ol{1}}{\sqrt{n}}}^T\ol{A}_p{\frac{\ol{1}}{\sqrt{n}}}-
  \E\frac{\ol{1}^{T}}{\sqrt{n}}\ol{A}_p\frac{\ol{1}}{\sqrt{n}}\right|  } 
\\
&\le &\E\sup_{p \in \left[\frac{64\log n}{n}, 1\right]}\left|{\frac{\ol{1}}{\sqrt{n}}}^TA_p{\frac{\ol{1}}{\sqrt{n}}}- \E\frac{\ol{1}^{T}}{\sqrt{n}}A_p\frac{\ol{1}}{\sqrt{n}}\right| + 2nP(\overline{E}_{2})~.
\end{eqnarray*}
Note that $\frac{\ol{1}}{\sqrt{n}}^TA_p\frac{\ol{1}}{\sqrt{n}} =
(2/n)\sum_{i<j} \IND_{U_{i,j}<p}$. Thus, the first term on the
right-hand side is just the maximum deviation between the 
cumulative distribution function of a uniform random variable and its
empirical counterpart based on $\binom{n}{2}$ random samples. This
may be bounded by the classical Dvoretzky-Kiefer-Wolfowitz theorem \cite{DvKiWo56}.
Indeed, by Massart's version \cite{Mas90},
we have 
\begin{eqnarray*}
\E\sup_{p  \in \left[\frac{64\log n}{n}, 1\right]}\left|{\frac{\ol{1}}{\sqrt{n}}}^TA_p{\frac{\ol{1}}{\sqrt{n}}}-
  \E\frac{\ol{1}^{T}}{\sqrt{n}}A_p\frac{\ol{1}}{\sqrt{n}}\right| 
& \le &
\E\sup_{p \in [0, 1]}\left|{\frac{\ol{1}}{\sqrt{n}}}^TA_p{\frac{\ol{1}}{\sqrt{n}}}-
  \E\frac{\ol{1}^{T}}{\sqrt{n}}A_p\frac{\ol{1}}{\sqrt{n}}\right| \\
& \le &
4\int\limits_{t = 0}^{\infty}\exp(-2t^2)dt = \sqrt{2\pi}~.
\end{eqnarray*}
Thus, the second term on the right-hand side of (\ref{eq:lambdaprime}) is
bounded by the absolute constant $\sqrt{2\pi} +\frac{64}{7} \le 12$
since $P(\overline{E}_{2}) \le \frac{32}{7n}$. 

In order to bound the first term on
the right-hand side of (\ref{eq:lambdaprime}), we write
\begin{eqnarray*}
\lefteqn{
\sup_{p \in \left[\frac{64\log n}{n}, 1\right]}\left|\sup_{z \in
  \frac{2896}{\sqrt{np}}B^n_{2}} z^{T}\ol{A}_p\left(\frac{\ol{1}}{\sqrt{n}} +
  \frac{z}{2}\right)- \E\sup_{z \in
  \frac{2896}{\sqrt{np}}B^n_{2}} z^{T}\ol{A}_p\left(\frac{\ol{1}}{\sqrt{n}} +
  \frac{z}{2}\right)\right|   } \\
\\
&\le& \sup_{p \in \left[\frac{64\log n}{n}, 1\right]}\sup_{z \in
      \frac{2896}{\sqrt{np}}B^n_{2}}\left|
      z^{T}\ol{A}_p\left(\frac{\ol{1}}{\sqrt{n}} + \frac{z}{2}\right)- \E z^{T}\ol{A}_p\left(\frac{\ol{1}}{\sqrt{n}} + \frac{z}{2}\right)\right|
\\
&\le& \sup_{p \in \left[\frac{64\log n}{n},
      1\right]}\frac{2896}{\sqrt{np}}\sup_{z \in
      \frac{2896}{\sqrt{np}}B^n_{2}}
\left\|\frac{\ol{1}}{\sqrt{n}} + \frac{z}{2}\right\|\ \cdot \|\ol{A}_{p} -\E \ol{A}_p\| 
\\
& \le & \sup_{p \in \left[\frac{64\log n}{n}, 1\right]}2896\times 594\left(1 + \frac{1448}{\sqrt{np}}\right)
\\
& \le & 2896\times 594\left(1 + \frac{1448}{\sqrt{64\log(7)}}\right) \le 4.5\times10^8~.
\end{eqnarray*}
Finally, note that with probability at least $1 - \frac{64}{7n}$ for
all $p \in \left[\frac{64\log n}{n}, 1\right]$ we have $\ol\lambda_{p}
= \lambda_p$. 
Moreover, for all $p$,
\[
\E\lambda_{p} - \E\lambda'_p \le \E\sup_{x \in S^{n - 1}}(x^TA_px(1 -
\IND_{E_1 \cap E_2})) \le nP(\overline{E}_{1} \cup \overline{E_2}) \le
\frac{64}{7}~.
\]
Thus,
\[
\E\sup_{p \in \left[\frac{64\log n}{n}, 1\right]}|\lambda_{p} - \E\lambda_{p}| \le \frac{128}{7} + \E\sup_{p \in \left[\frac{64\log n}{n}, 1\right]}|\ol\lambda_{p} - \E\ol\lambda_{p}| \le 5\times10^8~,
\]
proving the first inequality of the theorem.

To prove the second inequality, we follow the argument of Example
$6.8$ in \cite{BoLuMa13}. 
Denote $Z = \sup_{p \in \left[\frac{64\log n}{n},
    1\right]}|\lambda_{p} - \E\lambda_{p}|$ and $Z_{i, j}' = \sup_{p
  \in \left[\frac{64\log n}{n}, 1\right]}|\lambda'_{p} -
\E\lambda_{p}|$ where $\lambda'_{p}$ is the largest eigenvalue of the
adjacency matrix $A'_{p}$ of the random graph that is obtained from
$A_p$ by replacing $U_{i,j}$ by an independent copy. 
Denoting the first eigenvector of $A_p$ by $v_p$ and the first eigenvector of $A'_p$ by $v'_p$ and the (random) maximizer $\sup_{p \in \left[\frac{64\log n}{n}, 1\right]}|v^T_pA_pv_p - \E\lambda_{p}|$ by $p^*$, we have
\begin{eqnarray*}
(Z - Z_{i, j}')_{+} &\le &
\left(\sup_{p \in \left[\frac{64\log n}{n}, 1\right]}|v^T_pA_pv_p - \E\lambda_{p}| - \sup_{p \in \left[\frac{64\log n}{n}, 1\right]}|v'^T_pA'_pv'_p - \E\lambda_{p}|\right)\IND_{Z \ge Z_{i, j}'}
\\
&\le & \left|v^T_{p^*}A_{p^*}v_{p^*} - \E\lambda_{p^*} - v'^T_{p^*}A'_{p^*}v'_{p^*} - \E\lambda_{p^*}\right|\IND_{Z \ge Z_{i, j}'}
\\
&\le & \left|v^T_{p^*}(A_{p^*} - A'_{p^*})v_{p^*}\right|\IND_{Z \ge Z_{i, j}'}
\\
&\le & 4|v^i_{p^*}v^j_{p^*}|~.
\end{eqnarray*}
This implies $\sum\limits_{1 \le i \le j \le n}(Z - Z_{i, j}')^2_{+} \le 16$. Thus, for any $t \ge 0$,
\[
\PROB\left\{\sup_{p \in \left[\frac{64\log n}{n}, 1\right]}|v^T_pA_pv_p - \E\lambda_{p}| - \E \sup_{p \in \left[\frac{64\log n}{n}, 1\right]}|v^T_pA_pv_p - \E\lambda_{p}| \ge t\right\} \le \exp(-t^2/32)~.
\]
Using the bound $ \E \sup_{p \in \left[\frac{64\log n}{n}, 1\right]}|v^T_pA_pv_p - \E\lambda_{p}| \le 5\times10^8$, we have for $t' = t + 5\times10^8$
\[
\PROB\left\{\sup_{p \in \left[\frac{64\log n}{n}, 1\right]}|v^T_pA_pv_p - \E\lambda_{p}| \ge t'\right\} \le  \exp(-(t' - 5\times10^8)^2/32)~.
\]
For $t' \ge 10^9$ the claim follows.

\subsection{Proof of Theorem \ref{thm:moment}}
\label{sec:momentproof}

Let $v_p$ denote an eigenvector corresponding to the largest eigenvalue of $A_p$ such that $\|v_p\|=1$.
Recall that $\kappa=2\times 835^2$ and $c=1/9408$. One of the key
elements of the proof is the following new variant of a 
delocalization inequality of Mitra \cite{Mit09}.

\begin{lemma}
\label{lem:weakdeloc}
Let $n \ge 7$ and $p \ge \kappa\log^3(n)/n$. 
Let $v_p$ denote an eigenvector corresponding to the largest
eigenvalue $\lambda_p$ of $A_p$ with $\|v_p\|_2=1$. 
Then, with probability at least 
\begin{eqnarray*}
 1 - 4(n - 1)\exp\left(-2c\left(\frac{\log(np)}{\log n}\right)^2(n - 1)p\right)~,
\end{eqnarray*}
\[
\left\|v_p\right\|_{\infty} \le \frac{11}{\sqrt{n}}~.
\]
\end{lemma}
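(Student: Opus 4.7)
Fix $i\in[n]$ and aim to bound $|(v_p)_i|$; a union bound over $i$ will then yield the $\ell_\infty$ statement with the $(n-1)$ prefactor. Let $A_p^{(i)}$ be the matrix obtained from $A_p$ by zeroing its $i$-th row and column, denote by $r_i$ the $\{0,1\}$-vector carrying the off-diagonal entries of that row, and let $v^{(i)}$ be a unit top eigenvector of $A_p^{(i)}$. The crucial property is that $r_i$ is independent of $(A_p^{(i)}, v^{(i)})$. The eigenvector equation at coordinate $i$ reads
\[
\lambda_p (v_p)_i = r_i^T v_p,
\]
so I need (i) a lower bound $\lambda_p \gtrsim (n-1)p$, which follows from $\lambda_p \ge \ol{1}^T A_p \ol{1}/n$ together with a Chernoff bound on the total number of edges, and (ii) an upper bound on $|r_i^T v_p|$.

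For (ii) I would replace $v_p$ by the independent vector $v^{(i)}$, paying a small perturbation penalty. A Davis--Kahan-type estimate applied to the rank-two decomposition $A_p = A_p^{(i)} + r_i e_i^T + e_i r_i^T$ yields $\|v_p - v^{(i)}\|_2 = O(\|r_i\|_2/\mathrm{gap})$, where the gap between $\lambda_p$ and the remaining eigenvalues of $A_p^{(i)}$ is of order $np$ by the centered spectral norm estimates already invoked in Section~\ref{sec:unifproof}. Combined with $\|r_i\|_2 = O(\sqrt{np})$, the substitution error is negligible, so the main contribution is $r_i^T v^{(i)}$. Conditionally on $v^{(i)}$, this is a sum of independent Bernoulli-weighted terms with mean $p\,\ol{1}^T v^{(i)} \le p\sqrt{n}$, conditional variance at most $p$, and individual bound $\|v^{(i)}\|_\infty$. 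Bernstein's inequality at deviation level $s \asymp \sqrt{np}$ then delivers the required control on $|r_i^T v^{(i)}|$; after dividing through by $\lambda_p \asymp np$, one obtains $|(v_p)_i| = O(1/\sqrt n)$.

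The main obstacle is that the Bernstein step above demands that $\|v^{(i)}\|_\infty$ already be small, which is essentially the very statement being proved. Following Mitra~\cite{Mit09}, I would resolve this by a bootstrap: start from the trivial bound $\|v^{(i)}\|_\infty \le 1$, plug it into Bernstein to extract an improved bound $\|v^{(i)}\|_\infty \le \rho_1$, feed $\rho_1$ back to obtain $\rho_2 \ll \rho_1$, and iterate. After $\ell \asymp \log(np)/\log n$ rounds, $\rho_\ell$ has been driven down to $11/\sqrt n$. Since each round carries its own Bernstein failure event, the cumulative failure probability collects the factor $(\log(np)/\log n)^2$ in the exponent appearing in the statement. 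The hypothesis $p \ge 2\cdot 835^2 \log^3 n/n$ is calibrated precisely so that the reduced Bernstein exponent $(\log(np)/\log n)^2 (n-1)p$ comfortably dominates $\log n$, allowing the final union bound over $i \in [n]$ to absorb the $n-1$ prefactor while retaining the exponential decay claimed in the lemma.
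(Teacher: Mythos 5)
Your leave-one-out strategy is genuinely different from the paper's proof (which runs a Mitra-style power iteration: it bounds $\|(A_p/\lambda_p)^\ell\,\ol{1}/\sqrt n\|_\infty$ entrywise using one Chernoff bound on all degrees at precision $\Delta=\log(np)/(42\log n)$ --- this single event is where the factor $(\log(np)/\log n)^2$ in the exponent comes from --- and controls $\|(A_p/\lambda_p)^\ell\,\ol{1}/\sqrt n-v_p\|_\infty$ through the spectral gap $(|\lambda_i|/\lambda_p)^\ell\le n^{-1/2}$ together with the $\ell_2$ estimate $\alpha\ge 1-421/\sqrt{np}$, so only a handful of events enter the union bound). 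Your route, however, has two genuine gaps. First, the substitution step as written fails exactly in the sparse regime the lemma is designed for. With the crude Davis--Kahan bound $\|v_p-v^{(i)}\|_2=O(\|r_i\|_2/\mathrm{gap})=O(1/\sqrt{np})$, Cauchy--Schwarz gives $|r_i^T(v_p-v^{(i)})|=O(\|r_i\|_2/\sqrt{np})=O(1)$, hence a contribution of order $1/(np)$ to $|(v_p)_i|$ after dividing by $\lambda_p\asymp np$. This is below $1/\sqrt n$ only when $np\gtrsim\sqrt n$, whereas the lemma reaches down to $np\asymp\log^3 n$, where $1/(np)\gg 1/\sqrt n$. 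To save the step you need the refined form $\|v_p-v^{(i)}\|_2\lesssim\|(A_p-A_p^{(i)})v^{(i)}\|_2/\mathrm{gap}=|r_i^Tv^{(i)}|/\mathrm{gap}$ (using that the $i$-th coordinate of $v^{(i)}$ vanishes), which makes the substitution a multiplicative $(1+O(1/\sqrt{np}))$ correction; ``negligible'' is not justified as you stated it.

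Second, the bootstrap is not well-founded as described. Bernstein applied to $r_i^Tv^{(i)}$ improves the bound on $(v_p)_i$, i.e.\ on $\|v_p\|_\infty$, but what you must feed back into the next round is a bound on $\|v^{(i)}\|_\infty$, and an $\ell_2$ perturbation bound does not transfer $\ell_\infty$ control. Running the same mechanism for $v^{(i)}$ forces you to leave out a second row of $A_p^{(i)}$, and after $\ell$ rounds you carry a hierarchy of leave-$k$-out matrices with $\ell\asymp\log n/\log(np)$ (your $\ell\asymp\log(np)/\log n$ is inverted --- that is at most one round). The union bound over roughly $n^{\ell}=\exp(\log^2 n/\log(np))$ events in this hierarchy cannot be absorbed by failure probabilities of size $\exp(-c(\log(np)/\log n)^2 np)$ when $np\asymp\log^3 n$: there $\log^2 n/\log(np)\asymp\log^2 n/\log\log n$, while the available exponent is only of order $\log n\,(\log\log n)^2$. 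So either the iteration must be reorganized (e.g.\ a self-consistent argument that keeps a single leave-one-out family, with a careful measurability/conditioning analysis), or the hypothesis degrades to roughly $p\gtrsim\log^4 n/n$; in either case the claimed prefactor $4(n-1)$ and exponent do not come out of the scheme as written, and your explanation that the $(\log(np)/\log n)^2$ factor is the ``cumulative failure probability over rounds'' does not match any correct accounting --- summing $\ell$ failure events only multiplies the prefactor, it does not produce that exponent.
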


The lemma is proved in Section \ref{sec:deloc} below.
Based on this lemma, we may prove Theorem \ref{thm:moment}:

\medskip
\noindent
{\bf Proof of Theorem \ref{thm:moment}.}
We apply (\ref{eq:boboluma}) for the random variable $Z=\|A_p\|$, as a function of the $\binom{n}{2}$ independent
Bernoulli random variables $A_{i,j}=A_{i,j}^{(p)}$, $1\le i < j \le n$.
Let $E_1$ denote the event $\|v_p\|_{\infty} \le 11/\sqrt{n}$. By Lemma \ref{lem:weakdeloc},
\[
   \PROB\{E_1\} \ge 1 - 4(n - 1)\exp\left(-\frac{1}{4704}\left(\frac{\log(np)}{\log n}\right)^2(n - 1)p\right)~.
\]
For $1\le i < j \le n$, denote by $\lambda'_{i,j}$ the largest eigenvalue of the adjacency matrix obtained by replacing $A_{i, j}$ 
(and $A_{j, i}$ ) by an independent copy $A_{i,j}'$ and keeping all other entries unchanged. 
If the components of the eigenvector $v_p$ (corresponding to the eigenvalue $\lambda_p$) are $(v^{1}_p,\ldots,v^{j}_p)$,
then
\[
V_{+} = \E'\sum\limits_{i < j}^{n}(\lambda_p - \lambda'_{i,j})_{+}^2 \le 4\sum\limits_{i < j}^{n}\E' \left[(v^{i}_{p})^2(v^{j}_{p})^2(A_{i, j} - A'_{i, j})^2 \right] = 4\sum\limits_{i < j}^{n}(v^{i}_{p})^2(v^{j}_{p})^2(p + (1 - 2p)A_{i, j})_{+}~.
\]
Since $(A_{i, j} - A'_{i, j})^2\le 1$ and $\sum_i^n (v^{i}_p)^2=1$, we always have $V_{+}\le 4$. 
On the event $E_1$, we have a better control:
\[
V_{+}\IND_{E_1} \le  \frac{4 \cdot 11^4}{n^2}  \left(\binom{n}{2} p + (1 - 2p)\sum\limits_{i < j}A_{i, j}\right)~.
\]
Let $E_2$ denote the event that $\sum\limits_{i < j}^{n} A_{i, j} \le 2\E\sum\limits_{i < j}^{n} A_{i, j} \le pn(n-1)$.
By Bernstein's inequality,  $\PROB\{E_2\} \ge 1 - \exp(-\frac{3pn(n-1)}{8})$.
Then 
\[
V_{+}\IND_{E_1\cap E_2} \le  11^5p~.
\]
Thus,
\begin{eqnarray*}
\EXP\left[ (V_{+})^{\frac{k}{2}}\right] 
& = &
\EXP\left[ (V_{+})^{\frac{k}{2}}\IND_{E_1\cap E_2}\right] +  
\EXP\left[ (V_{+})^{\frac{k}{2}}\left(\IND_{\overline{E_1}}+ \IND_{\overline{E_2}}\right)\right] \\
& \le &
\left(11^5p \right)^{k/2}
+  
4^{k/2} \left(\PROB\{\overline{E_1}\} + \PROB\{\overline{E_2}\}\right)
\\
& \le &
2\left(11^5p \right)^{k/2}~,
\end{eqnarray*}
whenever $\PROB\{\overline{E_1}\} + \PROB\{\overline{E_2}\} \le \left(11^5p/4 \right)^{k/2}$.
This holds whenever 
\[
8(n - 1)\exp\left(-\frac{1}{4704}\left(\frac{\log(np)}{\log n}\right)^2(n - 1)p\right)
\le \left(11^5p/4 \right)^{k/2}~,
\]
guaranteed by our assumption on $k$.
The proof of the bound for the upper tail follows from (\ref{eq:boboluma}). 
The bound for the variance follows from the Efron-Stein inequality \eqref{efstein}. 

For the bound for the lower tail we use (\ref{eq:bobolumalower}).
Note that
\[
\max_{i<j} (\lambda_p - \lambda'_{i,j})_{+}\IND_{E_1}  \le 2\max_{i<j}(v_p^iv_p^j(A_{i, j} - A'_{i, j}))_{+}\IND_{E_1}  \le \frac{72}{n}~,
\]
and therefore
\[
\E\max_{i < j}(v_p^iv_p^j(A_{i, j} - A'_{i, j}))^k_{+}\IND_{E_1}  \le \left(\frac{72}{n}\right)^k~.
\]
Moreover,
\[
\E\max_{i < j}(2v_p^iv_p^j(A_{i, j} - A'_{i, j}))^k_{+}\IND_{\overline{E_1}}
\le 2^k\PROB\left\{\overline{E_1}\right\}
\le 2^{k + 2}(n - 1)\exp\left(-\frac{1}{4704}\left(\frac{\log(np)}{\log(n)}\right)^2(n - 1)p\right)~.
\]
We require 
\[
\left(\frac{72}{n}\right)^k \ge 2^{k + 2}(n - 1)\exp\left(-\frac{1}{4704}\left(\frac{\log(np)}{\log(n)}\right)^2(n - 1)p\right)
\]
which holds whenever
\[
k \le \frac{\frac{1}{4704}\left(\frac{\log(np)}{\log(n)}\right)^2(n - 1)p - \log(4(n - 1))}{\log(\frac{n}{36})}~.
\]
Under this condition
\[
\left(\E\max_{i < j}(v_p^iv_p^j(A_{i, j} - A'_{i, j}))^k_{+}\right)^{\frac{1}{k}} \le \frac{144}{n}~.
\]
Under our conditions for $k$ and $p$, we have $k(144/n)^2\le 2\cdot 11^5p$
and therefore (\ref{eq:bobolumalower}) implies the last inequality of
Theorem \ref{thm:moment}.

\begin{remark}{}
  It is tempting to understand if different approaches may lead to a simplified proof of Theorem
  \ref{thm:moment} with the weaker condition of $p \ge \frac{\log
    n}{n}$. 
  Perturbation theory based approach has been used by \cite{ErKnYaYi13} for
  the analysis of concentration of $\|A_p\|$ around its
  expectation. To compare with this paper, in this remark we assume
  that $A_p$ is the adjacency matrix of an
  Erd\H{o}s-R\'enyi random graph with loops, that is, all
  vertices link to themselves, each with probability $p$. Our results
  may be adapted to this case in a straightforward manner via minor
  changes in the constant factors. It can be shown (see formula in (6.17)
  in Section 6 of \cite{ErKnYaYi13}) that when
  $\|A_p - \E A_p\| < \|A_p\|$,
\begin{equation}
\label{lambda_sum}
\|A_p\| = \sum\limits_{j= 0}^{\infty}p\ol{1}^T\left(\frac{A_p - \E A_p}{\lambda_p}\right)^j\ol{1}~,
\end{equation}
where $\ol{1}\in \R^n$ is the vector whose components are all equal
to $1$.
Theorem 6.2 in \cite{ErKnYaYi13} (which is based on a thorough
analysis of the sum \eqref{lambda_sum}) shows that, for any $\xi \in
[2, A_0\log\log(n)]$, provided that $\frac{pn}{1 - p} \ge C_0^2\log^{4
  \xi} (n)$,  we have, with probability at least $1 - \exp\left(-\nu \log^{\xi}(n)\right)$,
\begin{equation}
\label{devap}
\|A_p\| = \E\|A_p\| + \frac{\ol{1}^T(A_p - \E A_p)\ol{1}}{n} + O\left(\frac{\log^{2\xi} (n)}{(1 - p)\sqrt{n}}\right)~,
\end{equation}
where the constant factors in the $O$-notation may depend on $\xi$,
and $\nu, A_0 \ge 10$ are absolute constants. It can be easily seen
that, up to an absolute constant factor, this bound implies the variance
bound \eqref{varianceineqality} but only in the regime
$p \ge \frac{c_0\log^8 (n)}{n}$, where $c_0$ is an absolute
constant. Moreover, it appears that the probability with which
\eqref{devap} holds is not sufficient to recover Theorem
\ref{thm:moment} in a straightforward manner. Indeed, we know that
\eqref{devap} does not hold on the event $E$ with
$\PROB\{E\} \le \exp\left(-\nu \log^{\xi}(n)\right)$. Let us consider
the moments of $\|A_p\|$ when $E$ holds. It can be shown using
\eqref{eq:alkrvu} that for some absolute $C >0$
\[
\E(\|A_p\| - \E\|A_p\|)^k\ind_{E} \le \sqrt{\E(\|A_p\| - \E\|A_p\|)^{2k}\PROB\{E\}} \le (Ck)^{\frac{k}{2}}\exp\left(-\nu \log^{\xi}(n)/2\right)~.
\]
To get the same bound as in Theorem \ref{thm:moment} we need $(Ck)^{\frac{k}{2}}\exp\left(-\nu \log^{\xi}(n)/2\right) \le (C^{\prime}kp)^{\frac{k}{2}}$, which holds when 
\[
k \le \frac{2\nu\log^{\xi}(n)}{\log\left(\frac{C}{C'p}\right)}~.
\]
The last inequality is more restrictive than what is required in
Theorem \ref{thm:moment} when
$p \ge \frac{c\nu \log^{\xi + 2}(n)}{n\log^{2}(np)}$ for some absolute
constant $c > 0$. To sum up, compared to \eqref{devap} our Theorem
\ref{thm:moment} has a different proof and provides tighter results in
some natural situations.
\end{remark}

\subsection{Delocalization bounds}
\label{sec:deloc}

In this section we prove the ``delocalization'' inequalities that
state that the eigenvector $v_p$ corresponding to the largest
eigenvalue of $A_p$ is close to the ``uniform'' vector $n^{-1/2}\ol{1}$.
The following lemma is crucial in the proof of Theorem \ref{thm:unif}.
This proof is based on an argument of Mitra \cite{Mit09}. However, we
need to modify it to get uniformity and also significantly better concentration guarantees.

\begin{lemma}
\label{lem:unifdeloc}
Let $n \ge 7$ and $q \in [\frac{4\log n}{n}, \frac{1}{2}]$. Then,
with probability 
 $1 - 4\exp(-nq/64)$,
\[
\sup_{p \in [q, 2q]}\left\|v_p - \frac{\ol{1}}{\sqrt{n}}\right\|_{2} \le \frac{2896}{\sqrt{nq}}~.
\]
\end{lemma}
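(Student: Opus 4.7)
The strategy is to reduce the uniform delocalization claim to the uniform operator-norm bound of Lemma \ref{lem:centered} combined with a pointwise eigenvector-perturbation argument. The key observation is that $\E A_p = p(J - I)$ has $u := \ol{1}/\sqrt{n}$ as its top eigenvector with eigenvalue $p(n-1)$ and all $n-1$ remaining eigenvalues equal to $-p$; the spectral gap is therefore exactly $pn$. Since Lemma \ref{lem:centered} controls $\|A_p - \E A_p\|$ uniformly over $p \in [q, 2q]$, a perturbation argument applied pointwise in $p$ will automatically produce a uniform eigenvector bound with no additional chaining needed.

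I would work on the event $\Omega$ on which $\sup_{p \in [q, 2q]} \|A_p - \E A_p\| \le 420\sqrt{nq}$; by Lemma \ref{lem:centered} this has probability at least $1 - e^{-nq/64}$. (The extra factor $4$ in the stated probability absorbs a few auxiliary high-probability concentration events, e.g.\ a Bernstein-type bound on $u^T A_p u$ used to make the lower bound on $\lambda_p$ sharp at the endpoint.) Fix $p \in [q, 2q]$, let $v_p$ be a top unit eigenvector of $A_p$ with sign chosen so that $x := \inr{v_p, u} \ge 0$, and write $v_p = x u + y$ with $y \perp u$. Using $\E A_p = pn\cdot uu^T - pI$ and projecting the identity $A_p v_p = \lambda_p v_p$ onto $u^{\perp}$ gives $(p + \lambda_p)\,y = P_{u^{\perp}}(A_p - \E A_p) v_p$, hence
\[
\|y\|_2 \le \frac{\|A_p - \E A_p\|}{p + \lambda_p}.
\]
A lower bound on the denominator comes from the variational inequality $\lambda_p \ge u^T A_p u \ge p(n-1) - \|A_p - \E A_p\|$, so $p + \lambda_p \ge pn - \|A_p - \E A_p\|$.

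Combining these, on $\Omega$, whenever $pn \ge 2 \cdot 420 \sqrt{nq}$ (which, using $p \ge q$, is equivalent to $\sqrt{nq} \ge 840$) we obtain $\|y\|_2 \le 2\cdot 420\sqrt{nq}/(pn) \le 840/\sqrt{nq}$. Using $\|v_p - u\|_2^2 = 2(1 - x) \le 2\|y\|_2^2$ (since $x = \sqrt{1 - \|y\|_2^2}$ and $1 - \sqrt{1-t} \le t$ for $t \in [0,1]$), this yields $\|v_p - u\|_2 \le 840\sqrt{2}/\sqrt{nq} \le 2896/\sqrt{nq}$. In the complementary regime $nq < 840^2$, the bound $2896/\sqrt{nq}$ exceeds $\sqrt{2}$ and the inequality is trivial from $\|v_p - u\|_2 \le \sqrt{2}$. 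Since the only random input to the argument is $\|A_p - \E A_p\|$, which is controlled uniformly in $p$ on $\Omega$, the resulting eigenvector bound holds simultaneously for all $p \in [q, 2q]$.

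The main obstacle is essentially bookkeeping: tracking constants across the effective regime of the perturbation argument and the sparse regime where the bound is vacuous, and choosing the sign of $v_p$ consistently. No chaining in $p$ is required, because Lemma \ref{lem:centered} has already done all the work of making the perturbation bound uniform in $p$.
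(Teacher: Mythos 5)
Your proof is correct, but it is genuinely different from (and leaner than) the paper's argument. The paper decomposes $\ol{1}/\sqrt{n}=\alpha v_p+\beta v_p^{\perp}$, passes through the intermediate vector $\frac{A_p}{\lambda_p}\frac{\ol{1}}{\sqrt{n}}$, bounds the subdominant eigenvalues $|\lambda_i|$, $i\ge 2$, via the F\"uredi--Koml\'os lemmas, gets a lower bound on $\lambda_p$ from a uniform DKW bound on $\frac{2}{n}\sum_{i<j}\IND_{U_{ij}<p}$, and fixes the sign of $v_p$ through connectivity plus Perron--Frobenius; this is why its probability bound carries the extra events (hence the factor $4$). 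You instead exploit that $\E A_p=pn\,uu^T-pI$ acts as $-pI$ on $u^{\perp}$, so projecting $A_pv_p=\lambda_p v_p$ onto $u^{\perp}$ gives $(\lambda_p+p)\|y\|_2\le\|A_p-\E A_p\|$ directly, and the variational lower bound $\lambda_p\ge u^TA_pu\ge p(n-1)-\|A_p-\E A_p\|$ closes the loop using only the single event of Lemma \ref{lem:centered}; uniformity in $p$ is indeed inherited for free, the arithmetic checks out ($840\sqrt{2}\le 2896$, and the claim is vacuous when $\sqrt{nq}<840$ since $\|v_p-u\|_2\le\sqrt{2}$ under your sign convention), and you even get probability $1-e^{-nq/64}$, better than stated. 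Two small remarks: your sign normalization $\inr{v_p,\ol{1}}\ge 0$ replaces the paper's Perron--Frobenius nonnegativity, which is harmless since $v_p$ is defined only up to sign and the downstream use in Theorem \ref{thm:unif} only needs the top eigenvector (suitably signed) to lie in the ball around $\ol{1}/\sqrt{n}$; and note that the paper's longer route also produces the intermediate estimates \eqref{lambdalow} and \eqref{weakres}, which are reused verbatim in the proof of Lemma \ref{lem:weakdeloc}, so replacing the paper's proof by yours would require rederiving those facts there.
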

\begin{proof}

\noindent
First note that there exists a unique vector $v^{\perp}_p$ with
$(v^{\perp}_p, v_p) = 0$ and $\|v^{\perp}_p\|_2 = 1$ 
such that
\begin{equation}
\label{form}
\ol{1}/\sqrt{n} = \alpha v_p + \beta v^{\perp}_p
\end{equation}
for some $\alpha,\beta\in \R$. 
By Lemma \ref{lem:centered}, with probability at least $1 - \exp(-nq/64)$,
\[
\sup_{p \in [q, 2q]}\|A_p - \E A_p\| \le 420\sqrt{nq}~.
\]
Notice that $\E A_p =
pn\frac{\ol{1}}{\sqrt{n}}\frac{\ol{1}^T}{\sqrt{n}} - pI_{n}$, where
$I_{n}$ is an identity $n \times n$ matrix.  
Since the graph with adjacency matrix $A_q$ is connected with probability  at least $1 - (n -
1)\exp(-nq/2)$ (see, e.g., \cite[Section 5.3.3]{Tro15}), by
monotonicity of the property of connectedness, the same
holds simultaneously for all graphs $A_p$ for $p\in [q,2q]$. 
Also, by the Perron-Frobenius theorem, if the graph is connected, the components of $v_p$ are all
nonnegative for all $p \in [q, 2q]$. 
Using that $\alpha = \left(\frac{\ol{1}}{\sqrt{n}}, v_p\right)$,
\begin{eqnarray*}
(A_p - \E A_p)v_p &=& \lambda_p v_p - pn\frac{\ol{1}}{\sqrt{n}}\frac{\ol{1}^T}{\sqrt{n}}v_p + pv_{p}
\\
&=& \lambda_p v_p - pn\alpha\frac{\ol{1}}{\sqrt{n}} + pv_p
\\
&= & \lambda_p v_p - pn\alpha(\alpha v_p + \beta v^{\perp}_p) + pv_p
\\
&= & (\lambda_p + p - pn\alpha^2)v_p - pn\alpha\beta v^{\perp}_p.
\end{eqnarray*}
This leads to 
\begin{equation}
\label{cond}
(\lambda_p + p - pn\alpha^2)^2 \le 420^2nq~.
\end{equation}
Since $\alpha \in [0, 1]$, this implies that, with probability at
least $1 - \exp(-nq/64) - (n - 1)\exp(-nq/2)$, 
simultaneously for all $p \in [q, 2q]$
\begin{equation}
\label{lambdaupper}
\lambda_p \le p(n - 1) + 420\sqrt{nq}~.
\end{equation}
We may get a lower bound for $\lambda_p$ by noting that
\[
\lambda_p \ge \frac{1}{n}\ol{1}^TA_p\ol{1}  = \frac{2}{n}\sum\limits_{i < j}^n\IND_{U_{ij} < p}~. 
\]
Applying Massart's version of the Dvoretzky-Kiefer-Wolfowitz theorem \cite{Mas90}, we have, for all $t \ge 0$,
\[
\PROB\left\{\sup_{p \in [0, 1]}\left|\frac{2}{n}\sum\limits_{i <
      j}^n\IND_{U_{ij} < p} - (n - 1)p\right| \ge (n - 1)t\right\} 
\le 2\exp\left(-n(n - 1)t^{2}\right)~.
\]
Choosing $t = \frac{\sqrt{nq}}{n - 1}$,  we have, with probability at least $1 - 2\exp\left(-nq/2\right)$,
\begin{equation}
\label{lambdalow}
\lambda_p \ge p(n - 1) - \sqrt{nq}~.
\end{equation}
This lower bound, together with (\ref{cond}) gives
\begin{equation}
\label{weakres}
\alpha \ge \alpha^2 \ge \frac{\lambda_p + p}{pn} - \frac{420\sqrt{nq}}{pn} \ge 1 - \frac{421}{\sqrt{nq}}
\end{equation}
with probability at least $1 - \exp(-nq/64) - (n - 1)\exp(-nq/2) -
2\exp\left(-nq/2\right) \ge 1 - 4(n - 1)\exp(-nq/64)$.
For the rest of the proof, we denote this event by $E$.

Next, write
\begin{equation}
\label{eq:triangle}
\left\|\frac{\ol{1}}{\sqrt{n}} - v_{p}\right\|_{2} \le \left\|\frac{A_p}{\lambda_{p}}\frac{\ol{1}}{\sqrt{n}} - v_{p}\right\|_{2} + \left\|\frac{A_p}{\lambda_{p}}\frac{\ol{1}}{\sqrt{n}} - \frac{\ol{1}}{\sqrt{n}}\right\|_{2}.
\end{equation}
We analyze both terms on the right-hand side.
Observe that $\E A_p \frac{\ol{1}}{\sqrt{n}} =
\frac{(n - 1)p\ol{1}}{\sqrt{n}}$.
The second term on the right-hand side of (\ref{eq:triangle}) may be
bounded on the event $E$, for all $p\in [q,2q]$, as
\begin{align*}
\left\|\frac{A_p}{\lambda_{p}}\frac{\ol{1}}{\sqrt{n}} - \frac{\ol{1}}{\sqrt{n}}\right\|_{2} &\le \frac{1}{\lambda_p}\left\|A_p\frac{\ol{1}}{\sqrt{n}} - \frac{(n - 1)p\ol{1}}{\sqrt{n}}\right\|_{2} + \frac{1}{\lambda_p}\left\|\frac{((n - 1)p - \lambda_p)\ol{1}}{\sqrt{n}}\right\|_{2}
\\
&= \frac{1}{\lambda_p}\left\|A_p\frac{\ol{1}}{\sqrt{n}} - \E A_p\frac{\ol{1}}{\sqrt{n}}\right\|_{2} + \frac{|(n - 1)p - \lambda_p|}{\lambda_p}
\\
&\le \frac{\left\|A_p - \E A_p\right\| + |(n - 1)p - \lambda_p|}{\lambda_p}
\\
&\le \frac{420\sqrt{nq} + 420\sqrt{nq}}{p(n - 1) - \sqrt{nq}}
\\
&\le \frac{1640}{\sqrt{nq}}~.
\end{align*}

Thus, on the event $E$, for all $p \in [q, 2q]$,
\[
\left\|\frac{\ol{1}}{\sqrt{n}} - v_{p}\right\|_{2} 
\le \left\|\frac{A_p}{\lambda_{p}}\frac{\ol{1}}{\sqrt{n}} - v_{p}\right\|_{2} + \frac{1640}{\sqrt{nq}}~.
\]
For each $p$, we may write $v^{\perp}_p = \sum\limits_{i = 2}^{n}\gamma_{i}v^i_p$, where $v^i_p$ is the $i$-th orthonormal eigenvector of $A_p$.
Then
\[
\frac{A_p}{\lambda_{p}}\frac{\ol{1}}{\sqrt{n}} = \alpha v_p + \beta\sum\limits_{i = 2}^{n}\frac{\gamma_{i}\lambda_{i}v_p^i}{\lambda_p}~,
\]
where $\lambda_{i}$ is $i$-th eigenvalue of $A_p$. 
By the Perron-Frobenius theorem, we have $|\lambda_{i}| \le
\lambda_{p}$ for all $i = 2, \ldots, n$. 
Moreover, from F\"uredi and Koml\'os \cite[Lemmas 1 and 2]{FuKo81} ,
for all $t \in \mathbb{R}$ we have that $|\lambda_{i}| \le \|A_{p} -
t\frac{\ol{1}}{\sqrt{n}}\frac{\ol{1}^T}{\sqrt{n}}\|$ for $i \ge 2$. 
Choosing $t = np$ we obtain $|\lambda_{i}| \le \|A_{p} - \E A_p\| +
p\|I_{n}\| \le 420\sqrt{nq} + p \le 422\sqrt{nq}$~.
Thus, using (\ref{weakres}), on the event $E$,
\[
\left\|\frac{A_p}{\lambda_{p}}\frac{\ol{1}}{\sqrt{n}} - v_{p}\right\|_{2} \le 1 - \alpha + \beta\max\limits_{i \ge 2}\frac{|\lambda_i|}{\lambda_p} + \frac{1640}{\sqrt{nq}}\le \frac{2061}{\sqrt{nq}} + \frac{422\sqrt{nq}}{(n - 1)p - \sqrt{nq}} \le \frac{2896}{\sqrt{nq}}~,
\]
as desired.
\end{proof}

We close this section by proving the ``weak'' delocalization bound of Lemma \ref{lem:weakdeloc}.

\medskip
\noindent
{\bf Proof of Lemma \ref{lem:weakdeloc}.}
We use the notation introduced in the proof of Lemma
\ref{lem:unifdeloc}. Here we fix $p\ge \kappa \log^3 n/n$. Fix $\ell \in \mathbb{N}$ and write
\begin{equation}
\label{decomposition}
\left\|v_{p}\right\|_{\infty} \le \left\|\left(\frac{A_p}{\lambda_{p}}\right)^{\ell}\frac{\ol{1}}{\sqrt{n}} - v_{p}\right\|_{\infty} + \left\|\left(\frac{A_p}{\lambda_{p}}\right)^{\ell}\frac{\ol{1}}{\sqrt{n}}\right\|_{\infty}~.
\end{equation}
We bound both terms on the right-hand side. 
We start with the second term and rewrite it as 
\[
\left\|\left(\frac{A_p}{\lambda_{p}}\right)^{\ell}\frac{\ol{1}}{\sqrt{n}}\right\|_{\infty}
= \frac{1}{\sqrt{n}} \left|\frac{(n -
    1)p}{\lambda_{p}}\right|^{\ell}\left\|\left(\frac{A_p}{(n -
      1)p}\right)^{\ell}\ol{1}\right\|_{\infty}~. 
\]
Denote by $D_i = \sum\limits_{j = 1}^{n}A_{i, j}$ the degree of vertex
$i$. By standard tail bounds for the binomial distribution we have, for a fixed $i$ and $0 \le \Delta \le 1$,
\[
\PROB \left\{D_{i} < p(n - 1) - p(n - 1)\Delta\right\} \le \exp\left(\frac{-\Delta^2p(n - 1)}{2}\right)
\]
and
\[
\PROB\left\{D_{i} > p(n - 1) + p(n - 1)\Delta\right\} 
\le 
\exp\left(-\frac{3\Delta^2p(n - 1)}{8}\right)~.
\]
Using the union bound, we have
\[
\PROB\left\{ \max_i |D_{i} - p(n - 1)| > p(n - 1)\Delta\right\} \le 2(n - 1)\exp\left(-\frac{3\Delta^2p(n - 1)}{8}\right)~.
\]
We denote the event 
\[
\max_i |D_{i} - p(n - 1)| \le p(n - 1)\Delta
\]
by $E_1$. Observe that when $E_1$ holds we have $D_{i} \le p(n - 1)(1 + \Delta)$ and $D_{i} \ge p(n - 1)(1 - \Delta)$ for all $i$.
 
Assume that  $u\in \R^n$ is such that 
\begin{equation}
\label{boundforu}
\|u - \ol{1}\|_{\infty} \le 2t\Delta
\end{equation} for some $t \le \ell$. In what follows we choose
$\ell = \left\lfloor \frac{21\log n}{\log(np)}\right\rfloor$ and $\Delta =
\frac{\log(np)}{42\log n}$. Observe that $\ell\Delta \le \frac{1}{2}$.
Since $t\Delta^2 \le \ell\Delta^2 \le \frac{1}{2}\Delta$, we have
$\Delta + 2t\Delta^2 \le 2\Delta$. Thus, on the event $E_1$, using the last inequality together with \eqref{boundforu},
\begin{equation}
\label{ineq1}
\left(\frac{A_p}{(n - 1)p}u\right)_i \le \frac{p(n - 1)(1 + \Delta)(1 + 2t\Delta)}{(n -1)p} = 1 + \Delta + 2t\Delta + 2t\Delta^2 \le 1 + 2(t + 1)\Delta~.
\end{equation}
Now consider the term $\left|\frac{(n - 1)p}{\lambda_{p}}\right|^{\ell}$. Using \eqref{lambdalow} we have, with probability at least $1 - 2\exp\left(-np/2\right)$ (denote the corresponding event by $E_2$),
\[
\left|\frac{(n - 1)p}{\lambda_{p}}\right|^{\ell} \le
\left(1 - \frac{1}{\sqrt{p(n - 1)}}\right)^{-\ell}.
\]
Since $\ell \le \sqrt{p(n - 1)}$, we obtain $\left|\frac{(n - 1)p}{\lambda_{p}}\right|^{\ell} \le e$. Thus, applying \eqref{ineq1} $\ell$ times for vectors satisfying \eqref{boundforu}, on the event $E_1\cap E_2$, we have, for all $i$,
\[
\left(\left(\frac{A_p}{\lambda_p}\right)^{\ell}\ol{1}\right)_i = \left|\frac{(n - 1)p}{\lambda_{p}}\right|^{\ell}\left(\left(\frac{A_p}{(n - 1)p}\right)^{\ell}\ol{1}\right)_i \le  e( 1 + 2\ell\Delta) \le 2e~.
\]
We may similarly derive a lower bound since, for any vector satisfying \eqref{boundforu},
\begin{equation}
\label{lowerbound}
\left(\frac{A_p}{(n - 1)p}u\right)_i \ge \frac{p(n - 1)(1 - \Delta)(1 - 2t\Delta)}{(n - 1)p} = 1 - \Delta - 2t\Delta + 2t\Delta^2 \ge 1 - 2(t + 1)\Delta~.
\end{equation}
Analogously, applying \eqref{lowerbound} $\ell$ times, on the event $E_1\cap E_2$, we have
\[
\left(\left(\frac{A_p}{\lambda_p}\right)^{\ell}\ol{1}\right)_i = \left|\frac{(n - 1)p}{\lambda_{p}}\right|^{\ell}\left(\left(\frac{A_p}{(n - 1)p}\right)^{\ell}\ol{1}\right)_i \ge  \left|\frac{(n - 1)p}{\lambda_{p}}\right|^{\ell}( 1 - 2\ell\Delta) \ge 0~.
\]
Hence, on the event $E_1\cap E_2$,
\begin{equation}
\label{res1}
\left\|\left(\frac{A_p}{\lambda_{p}}\right)^{\ell}\frac{\ol{1}}{\sqrt{n}}\right\|_{\infty} \le \frac{2e}{\sqrt{n}}~.
\end{equation}
Next we bound the first term on the right-hand side of \eqref{decomposition}. Recall that for the decomposition $\ol{1}/\sqrt{n} = \alpha v_p + \beta v^{\perp}_p$ from (\ref{weakres}) we have $\alpha \ge 1 -
\frac{421}{\sqrt{np}}$ on an event $E_3$ of probability at least $1 - 4(n - 1)\exp(-np/64)$.
As before, we may write $v^{\perp}_p = \sum\limits_{i = 2}^{n}\gamma_{i}v^i_p$, where $v^i_p$ is the $i$-th orthonormal eigenvector of $A_p$.
Using $\ol{1}/\sqrt{n} = \alpha v_p + \beta v^{\perp}_p$, we have
\[
\left(\frac{A_p}{\lambda_{p}}\right)^{\ell}\frac{\ol{1}}{\sqrt{n}} = \alpha v_p + \beta\sum\limits_{i = 2}^{n}\gamma_{i}v_p^i\left(\frac{\lambda_{i}}{\lambda_{p}}\right)^{\ell},
\]
where $\lambda_{i}$ is $i$-th eigenvalue of $A_p$. 
Using F\"uredi and Koml\'os \cite[Lemmas 1 and 2]{FuKo81} once again,
for all $t \in \mathbb{R}$ we have that $|\lambda_{i}| \le \left\|A_{p} -
t\frac{\ol{1}}{\sqrt{n}}\frac{\ol{1}^T}{\sqrt{n}}\right\|$ for $i \ge 2$. 
Choosing $t = np$ we obtain $|\lambda_{i}| \le \|A_{p} - \E A_p\| +
p\|I_{n}\| \le 420\sqrt{np} + p \le 422\sqrt{np}$ on an event $E_4$ of probability at least $1 - 4(n - 1)\exp(-np/64)$.
Thus, on $E_4$ we have $\frac{|\lambda_{i}|}{\lambda_p} \le \frac{835}{\sqrt{np}}$ for $i \ge 2$, and therefore 
\begin{equation}
\label{res2}
\left\|\left(\frac{A_p}{\lambda_{p}}\right)^{\ell}\frac{\ol{1}}{\sqrt{n}} - v_{p}\right\|_{\infty} \le (1 - \alpha)\|v_p\|_{\infty} + \beta\max\limits_{i \ge 2}\left(\frac{|\lambda_{i}|}{\lambda_{p}}\right)^{\ell}~.
\end{equation}
Define $\kappa_1 = \frac{\log(835)}{\log(2\times 835^2)}$. Observe that $\kappa_1 < \frac{1}{2}$. Using $np \ge 2\times 835^2 = \kappa$,
\begin{align*}
\beta\max\limits_{i \ge 2}\left(\frac{|\lambda_{i}|}{\lambda_{p}}\right)^{\ell} &\le \beta\left(\frac{835}{\sqrt{np}}\right)^{\ell}
\\
&\le \left(\frac{835}{(np)^{\kappa_1}}\right)^\ell\exp\left((\frac{1}{2} - \kappa_1)\log(\frac{1}{np})\frac{21\log n}{\log(np)}\right) 
\\
&\le \exp\left(-21(\frac{1}{2} - \kappa_1)\log n\right) \le \frac{1}{\sqrt{n}}\ ,
\end{align*}
where we used $\left(\frac{835}{(np)^{\kappa_1}}\right)^\ell \le 1$ and the inequality $21(\frac{1}{2} - \kappa_1) > \frac{1}{2}$. Finally, on the event
$E_1\cap E_2\cap E_3\cap E_4$ we have, using the decomposition \eqref{decomposition} combined with \eqref{res1} and \eqref{res2}, that
\[
\|v_p\|_{\infty} \le \frac{1}{\alpha}\left(\frac{1 + 2e}{\sqrt{n}}\right) \le \frac{1}{1 - \frac{421}{\sqrt{np}}}\left(\frac{1 + 2e}{\sqrt{n}}\right) \le \frac{11}{\sqrt{n}}~.
\]

\subsection{Proof of Proposition \ref{prop:sqrtloglogn}}

It suffices to prove that
\[
\E\sup\limits_{p \in [0, \frac{64\log n}{n}]}|\lambda_{p} - \E\lambda_{p}| \le  5\sqrt{16 + 2\log\log n}~.
\]
Observe that
\[
\E\sup\limits_{p \in [0, 1]}|\lambda_{p} - \E\lambda_{p}| \le \E\sup\limits_{p \in [0, \frac{64\log n}{n}]}|\lambda_{p} - \E\lambda_{p}| + \E\sup\limits_{p \in [\frac{64\log n}{n}, 1]}|\lambda_{p} - \E\lambda_{p}|
\]
Let $p_0, p_1, \ldots, p_M$ be such that $0 = p_0 \le p_1 \le \cdots
\le p_M = \frac{64\log n}{n}$ and $\E(\lambda_{p_j} - \lambda_{p_{j -
    1}}) =  \eps$ for some $\eps > 0$ to be specified later. 
Such a choice is possible since $\lambda_p$ is nondecreasing in $p$. We have 
\begin{equation}
\label{eqM}
\eps M = \E\lambda_{p_M} \le \E\|A_{p_M} - \E A_{p_M}\| + \|\E A_{p_M}\| \le 170\sqrt{np_M} + np_M \le 1424\log n~.
\end{equation}
Denote for $p \in [0, p_M]$ the value $\pi_+[p] = \min\{q \in \{p_0, p_1, \ldots, p_M\}|\ q \ge p \}$ and $\pi_-[p] = \max\{q \in \{p_0, p_1, \ldots, p_M\}|\ p \geq q\}$. We have
\begin{eqnarray*}
\E\sup\limits_{p \in [0, \frac{64\log n}{n}]}|\lambda_{p} - \E\lambda_{p}|
&=&
\E\sup\limits_{p \in [0, \frac{64\log n}{n}]}\max(\lambda_{p} - \E\lambda_{p}, \E\lambda_{p} - \lambda_{p})
\\
&\le & \E\sup\limits_{p \in [0, \frac{64\log n}{n}]}\max(\lambda_{\pi_+[p]} - \E\lambda_{\pi_+[p]} + \eps, \E\lambda_{\pi_-[p]} - \lambda_{\pi_-[p]} + \eps)
\\
&= & \eps + \E\sup\limits_{p \in [0, \frac{64\log n}{n}]}\max(\lambda_{\pi_+[p]} - \E\lambda_{\pi_+[p]}, \E\lambda_{\pi_-[p]} - \lambda_{\pi_-[p]})
\\
&\le & \eps + \E\sup\limits_{q \in \{p_0, \ldots, p_M\}}|\lambda_{q} - \E\lambda_{q}|~.
\end{eqnarray*}
Since for each $p_i$, the random variable $|\lambda_{q} -
\E\lambda_{q}|$ has sub-Gaussian tails by (\ref{eq:alkrvu}), for their
maximum we obtain the bound
\[
\E\sup\limits_{q \in \{p_0, \ldots, p_M\}}|\lambda_{q} - \E\lambda_{q}| \le 4\sqrt{2\log 2M}~.
\]
Finally, using \eqref{eqM}
\[
\E\sup\limits_{p \in [0, \frac{64\log n}{n}]}|\lambda_{p} - \E\lambda_{p}| \le \inf\limits_{\eps > 0}(\eps + 4\sqrt{2\log (2848\log n/\eps)}) \le 5\sqrt{2\log (2848\log n)}~,
\]
as desired.

\subsection{Proof of Proposition \ref{prop:verysp}} 

The  proof is based on two standard facts that may be found in
  \cite{Bollobas01}.  For $k\ge 2$, let $T_k$ denote the number of
  components in a random graph $G(n, p)$ that are trees on $k$
  vertices. By Cayley's formula, $\E T_k \le \binom{n}{k}k^{k - 2}p^{k - 1}$. 
  Now we estimate the probability
  that there are trees of size at least $k + 1$. Although
  the asymptotical behaviour of this quantity is well understood, in
  what follows we need a non-asymptotic upper
  bound. By Markov's inequality and standard estimates, this probability
  is bounded by
\[
\PROB\left\{\sum\limits_{k +1}^{\infty}T_k \ge 1\right) \le \sum\limits_{j = k +1}^{\infty}\binom{n}{j}j^{j - 2}p^{j - 1} \le  \sum\limits_{j = k +1}^{\infty}\left(\frac{en}{j}\right)^{j}j^{j - 2}p^{j - 1} = \sum\limits_{j = k +1}^{\infty}\frac{en}{j^2}(enp)^{j - 1}.
\]
At the same time, Theorem 5.7 (i) in \cite{Bollobas01} states that if
$p = \frac{c}{n}$ for some $c \in [0, 1)$ then probability that
$G(n, p)$ is not a forest is bounded by
$\sum\limits_{k = 3}^{\infty}c^k = \frac{c^3}{1 - c}$. Finally, by the
estimate \eqref{eq:alkrvu}, Lemma \ref{basic}, and the monotonicty of
$\lambda_p$, with probability at least $1 - \frac{2}{n}$ we have
$ \lambda_{n^{-k/(k - 1)}} \le (173 + \sqrt{32})\sqrt{\log n} <
179\sqrt{\log n}.  $

Let $E_1$ denote the event that there are no trees of size greater
than $k + 1$, let $E_2$ denote the event that the graph is a forest,
and let $E_3$ denote the event that $\lambda_{n^{-k/(k - 1)}} <
179\sqrt{\log n}$. 
Using Jensen's inequality and the monotonicity of $\lambda_p$, we have
\[
\E\sup\limits_{p \in [0, n^{-k/(k - 1)}]}\left|\lambda_{p} -
\E\lambda_{p}\right| \le 2\E\sup\limits_{p \in [0, n^{-k/(k -
  1)}]}\left|\lambda_{p}\right| = 2\E\lambda_{n^{-k/(k - 1)}}~.
\]
Since the largest eigenvalue of 
a forest consisting of trees of size at most $k$ is bounded by $\sqrt{k-1}$,
we have, by the estimates above,
\begin{align*}
\E\lambda_{n^{-k/(k - 1)}} &\le \E\lambda_{n^{-k/(k - 1)}}\ind_{E_1 \cap E_2} + \E\lambda_{n^{-k/(k - 1)}}\ind_{E_3}(\ind_{\overline{E_1}} + \ind_{\overline{E_2}}) + 2n\PROB\{\overline{E_3}\}
\\
&\le \E\lambda_{n^{-k/(k - 1)}}\ind_{E_1 \cap E_2} + 179\sqrt{\log n}\ (\PROB\{\overline{E_1}\} + \PROB\{\overline{E_2}\})  + 2n\PROB\{\overline{E_3}\}
\\
&\le \sqrt{k-1} +  179\sum\limits_{j = k +1}^{\infty}\frac{en\sqrt{\log n}}{j^2}\left(\frac{e}{n^{1/(k - 1)}}\right)^{j - 1} + \frac{179\sqrt{\log n}}{(1 - n^{-1/(k - 1)})n^{3/(k - 1)}} + 4
\\
&\le \sqrt{k-1} + 4 + \frac{179e^{k + 1}}{(k + 1)^2}\left(\frac{\sqrt{\log n}}{n^{1/(k - 1)}}\right)/\left(1 - \frac{e}{n^{1/(k - 1)}}\right) + \frac{179\sqrt{\log n}}{(1 - n^{-1/(k - 1)})n^{3/(k - 1)}}~.
\end{align*}
The claim follows by observing that for $k \ge 2$,  $\frac{\sqrt{\log n}}{n^{3/(k - 1)}} \le \frac{\sqrt{\log n}}{n^{1/(k - 1)}} \le c_k$, where $c_k$ depends only on $k$.

\bibliographystyle{plain}

\begin{thebibliography}{10}

\bibitem{AlKrVu01}
N.~Alon, M.~Krivelevich, and V.H. Vu.
\newblock On the concentration of eigenvalues of random symmetric matrices.
\newblock {\em Israel Journal of Mathematics}, 131:259--267, 2002.

\bibitem{Bollobas01}
B\'ela Bollob\'as.
\newblock Random graphs.
\newblock {\em Cambridge University Press}, 2001.

\bibitem{Bava16}
Afonso~S. Bandeira and Ramon van Handel.
\newblock Sharp nonasymptotic bounds on the norm of random matrices with
  independent entries.
\newblock {\em The Annals of Probability}, 44(4):2479--2506, 2016.

\bibitem{BeBoKn17a}
Florent Benaych-Georges, Charles Bordenave, and Antti Knowles.
\newblock Largest eigenvalues of sparse inhomogeneous erd˝os-r´enyi graphs.
\newblock {\em arXiv 1704.02953}, 2017.

\bibitem{BeBoKn17b}
Florent Benaych-Georges, Charles Bordenave, and Antti Knowles.
\newblock Spectral radii of sparse random matrices.
\newblock {\em arXiv 1704.02945}, 2017.

\bibitem{BePl94}
Abraham Berman and Robert~J. Plemmons.
\newblock {\em Nonnegative matrices in the mathematical sciences}.
\newblock SIAM, 1994.

\bibitem{BoBoLuMa04}
S.~Boucheron, O.~Bousquet, G.~Lugosi, and P.~Massart.
\newblock Moment inequalities for functions of independent random variables.
\newblock {\em The Annals Probability}, 33:514--560, 2005.

\bibitem{BoLuMa13}
S.~Boucheron, G.~Lugosi, and P.~Massart.
\newblock {\em Concentration inequalities: A Nonasymptotic Theory of
  Independence}.
\newblock Oxford University Press, 2013.

\bibitem{DvKiWo56}
A.~Dvoretzky, J.~Kiefer, and J.~Wolfowitz.
\newblock Asymptotic minimax character of a sample distribution function and of
  the classical multinomial estimator.
\newblock {\em Annals of Mathematical Statistics}, 33:642--669, 1956.

\bibitem{ErRe60}
P.~Erd\H{o}s and A.~R\'enyi.
\newblock {On the evolution of random graphs}.
\newblock {\em Publications of the Mathematical Institute of the Hungarian
  Academy of Sciences, Ser. A}, 5:17--61, 1960.

\bibitem{ErKnYaYi13}
L{\'a}szl{\'o} Erd{\H{o}}s, Antti Knowles, Horng-Tzer Yau, Jun Yin.
\newblock Spectral statistics of {E}rd{\H{o}}s--{R}{\'e}nyi graphs {I}: local
  semicircle law.
\newblock {\em The Annals of Probability}, 41(3B):2279--2375, 2013.



\bibitem{FuKo81}
Z.~F\"uredi and J.~Koml\'os.
\newblock The eigenvalues of random symmetric matrices.
\newblock {\em Combinatorica}, 1:233--241, 1981.

\bibitem{JuLe17}
Paul Jung and Jaehun Lee.
\newblock Delocalization and limiting spectral distribution of
  {Erd\H{o}s-{R}\'enyi} graphs with constant expected degree.
\newblock {\em arXiv 1710.07002}, 2017.

\bibitem{KrSu03}
Michael Krivelevich and Benny Sudakov.
\newblock The largest eigenvalue of sparse random graphs.
\newblock {\em Combinatorics, Probability and Computing}, 12(1):61--72, 2003.


\bibitem{Mas90}
Pascal Massart.
\newblock The tight constant in the {D}voretzky-{K}iefer-{W}olfowitz
  inequality.
\newblock {\em Annals of Probability}, 18:1269--1283, 1990.

\bibitem{Mit09}
Pradipta Mitra.
\newblock Entrywise bounds for eigenvectors of random graphs.
\newblock {\em Electronic Journal of Combinatorics}, 16(1):R131, 2009.


\bibitem{TrVuWa13}
Linh~V. Tran, Van~H. Vu, and Ke~Wang.
\newblock Sparse random graphs: Eigenvalues and eigenvectors.
\newblock {\em Random Structures \& Algorithms}, 42(1):110--134, 2013.

\bibitem{Tro15}
Joel~A Tropp.
\newblock An introduction to matrix concentration inequalities.
\newblock {\em Foundations and Trends in Machine Learning}, 8(1-2):1--230,
  2015.

\bibitem{Vu05}
Van~H. Vu.
\newblock Spectral norm of random matrices.
\newblock In {\em Proceedings of the thirty-seventh annual ACM symposium on
  Theory of computing}, pages 423--430. ACM, 2005.

\end{thebibliography}

\end{document}